\newtheorem{thm}{Theorem}[section]
\newtheorem{case}[]{Case}
\newtheorem{conj}[thm]{Conjecture}
\newtheorem{claim}[]{Claim}
 \def\dfn#1{{\sl #1}}
 \def\qed{\hfill\square}
\def\qed{ \hfill $\blacksquare$}
\begin{document} 
\title{Gallai-Ramsey numbers for graphs\\with chromatic number three}

\author{Qinghong Zhao\thanks{Corresponding Author. Supported by the Summer Graduate Research Assistantship Program of Graduate School. E-mail address: qzhao1@olemiss.edu (Q. Zhao).}~~and~Bing Wei\thanks{Supported in part by the summer faculty research grant from CLA at the University of Mississippi. E-mail address: bwei@olemiss.edu (B. Wei).}\\ 
\small Department of Mathematics, University of Mississippi, University, MS 38677,  USA}

\date{}
\maketitle
\begin{abstract}
Given a graph $H$ and an integer $k\ge1$, the Gallai-Ramsey number $GR_k(H)$ is defined to be the minimum integer $n$ such that every $k$-edge coloring of the complete graph $K_n$ contains either a rainbow (all different colored) triangle or a monochromatic copy of $H$. In this paper, we study Gallai-Ramsey numbers for graphs with chromatic number three such as $\widehat{K}_m$ for $m\ge2$, where $\widehat{K}_m$ is a kipas with $m+1$ vertices obtained from the join of $K_1$ and $P_m$, and a class of graphs with five vertices, denoted by $\mathscr{H}$. We first study the general lower bound of such graphs and propose a conjecture for the exact value of $GR_k(\widehat{K}_m)$. Then we give a unified proof to determine the Gallai-Ramsey numbers for many graphs in $\mathscr{H}$ and obtain the exact value of $GR_k(\widehat{K}_4)$ for $k\ge1$. Our outcomes not only indicate that the conjecture on $GR_k(\widehat{K}_m)$ is true for $m\le4$, but also imply several results on $GR_k(H)$ for some $H\in \mathscr{H}$ which are proved individually in different papers.\\
 
\noindent{\bf Key words}: Gallai coloring, Gallai-Ramsey number, Rainbow triangle\\
\noindent{\bf 2010 Mathematics Subject Classification}: 05C15; 05C55; 05D10
\end{abstract}

\section{Introduction}
In this paper, we only deal with finite, simple and undirected graphs. Given a graph $G$ and the vertex set $V(G)$, let $|G|$ denote the order of $G$ and $G[W]$ denote the subgraph of $G$ induced by a set $W\subseteq V(G)$. Given disjoint vertex sets $X,Y\subseteq V(G)$, if each vertex in $X$ is adjacent to all vertices in $Y$ and all the edges between $X$ and $Y$ are colored with the same color, then we say that $X$ is \dfn{$mc$-adjacent} to $Y$, that is, $X$ is \dfn{blue-adjacent} to $Y$ if all the edges between $X$ and $Y$ are colored with blue. We use $P_n$, $C_n$ and $K_n$ to denote the path, cycle and complete graph on $n$ vertices, respectively; and $S_n$ to denote a star on $n+1$ vertices. A kipas $\widehat{K}_m$ is obtained by deleting one edge on the rim of a wheel with $m+1$ vertices. For an integer $k\ge1$, we define $[k]=\{1,\ldots,k\}$. \medskip

The complete graphs under edge coloring without rainbow triangle usually have pretty interesting and somehow beautiful structures. In 1967, Gallai studied the structure under the guise of transitive orientations and obtained the following result \cite{Gallai} which was restated in \cite{Gy} in the terminology of graphs. 
\begin{thm}[\cite{Gallai},\cite{Gy}]\label{Gallai}
For a complete graph $G$ under any edge coloring without rainbow triangle, there exists a partition of $V(G)$ (called a Gallai-partition) with parts $V_1, V_2, \dots, V_\ell$, $\ell\ge2$, such that there are at most two colors on the edges between the parts and only one color on the edges between each pair of parts.
\end{thm}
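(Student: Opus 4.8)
The plan is to prove Theorem~\ref{Gallai} by induction on $n=|V(G)|$. The case $n=2$ is immediate (two singleton parts), and if at most two colors occur in the whole coloring then the partition of $K_n$ into $n$ singletons already works; so assume $n\ge 3$ and that at least three colors appear. Delete a vertex $v$ and apply induction to $G-v$ to get a Gallai-partition $V_1,\dots,V_m$ ($m\ge 2$) with all colors between parts lying in $\{1,2\}$, and let $R\cong K_m$ be the resulting $2$-edge-colored complete graph on $\{V_1,\dots,V_m\}$. The task is to reinsert $v$, and the basic tool is: since there is no rainbow triangle, whenever $V_iV_j$ has color $c$ in $R$ and $x\in V_i$, $y\in V_j$, the colors of $vx$, $vy$, $xy$ are not pairwise distinct, so (as $c(xy)=c$) either $c\in\{c(vx),c(vy)\}$ or $c(vx)=c(vy)$.

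Suppose first that some color class of $R$, say the color-$1$ graph, is a disconnected spanning subgraph of $K_m$. Its components group the $V_i$ into blocks whose unions $W_1,\dots,W_t$ ($t\ge 2$) have every edge between two distinct blocks colored $2$. Applying the tool with $c=2$ to pairs of blocks, any two colors different from $2$ that $v$ sends into two different blocks must coincide; hence either $v$ is incident to at most two colors, or all edges from $v$ of color $\ne 2$ enter a single block $W_{s_0}$, in which case replacing $W_{s_0}$ by $W_{s_0}\cup\{v\}$ yields a partition of $G$ with $2$ as the only color between parts. The remaining possibility — $v$ incident to exactly two colors — is handled in the same spirit, by checking that $v$ can be absorbed into a suitable block, or the block structure mildly adjusted, so that each pair of parts stays monochromatic and only two colors survive between parts.

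Otherwise both color classes of $R$ are connected spanning subgraphs of $K_m$ (forcing $m\ge 4$). Here there is no cheap separation, and the point to prove is that $v$ is \emph{color-parallel} to one of the existing parts: that there is an index $i$ such that, for every $j\ne i$, $v$ is joined to all of $V_j$ by the single color that $R$ assigns to $V_iV_j$. Granting this, $V_1,\dots,V_{i-1},\,V_i\cup\{v\},\,V_{i+1},\dots,V_m$ is the required Gallai-partition of $G$, which closes the induction.

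The hard part is this last assertion (and the analogous point flagged in the disconnected case). One must propagate the no-rainbow-triangle condition all the way around the complete reduced graph $R$ — crucially using that \emph{every} pair of parts is joined by a $\{1,2\}$-colored edge — to show that $v$ cannot attach to the parts in a genuinely conflicting way: that $v$ is $mc$-adjacent to all but at most one part and is in fact parallel to some part. I expect this to need a careful but elementary case analysis on the colors of the edges from $v$ to two or three parts at a time, organized by which of $1,2$ appears on the relevant $R$-edges; everything else is bookkeeping.
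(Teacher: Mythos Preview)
The paper does not contain a proof of Theorem~\ref{Gallai}; it is quoted from \cite{Gallai} and \cite{Gy} and used as a black box throughout Section~3. There is therefore no ``paper's own proof'' to compare your proposal against.

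As to the proposal itself: vertex-deletion induction is not the route taken in either cited source --- Gallai's original argument passes through transitive orientations and modular (substitution) decomposition, and the Gy\'arf\'as--Simonyi reformulation argues more directly about color classes on the whole graph rather than peeling off one vertex at a time. Your plan can in principle be completed, but the step you yourself flag as ``the hard part'' --- that $v$ must be color-parallel to some existing part when both color classes of $R$ are connected --- is essentially the entire content of the theorem, and you have not supplied it. Two concrete gaps: first, you have not shown the preliminary fact that $v$ is $mc$-adjacent to each $V_j$ (for $x,y$ in the same part $V_i$, nothing you wrote forces $c(vx)=c(vy)$); second, in the disconnected case you wave through the subcase where $v$ is incident to exactly two colors with ``handled in the same spirit'', but there the block structure may genuinely need to be refined, not just have $v$ absorbed into a block. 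Until those case analyses are actually carried out, what you have is a plausible outline, not a proof.
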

In honor of Gallai's result, the edge coloring of a complete graph without rainbow triangle is called \dfn{Gallai coloring}. Given graphs $H_1,\ldots, H_k$ and an integer $k \ge 1$, the \dfn{Ramsey number} $R(H_1,\ldots, H_k)$ is defined to be the minimum integer $n$ such that every $k$-edge coloring of $K_n$ contains a monochromatic copy of $H_i$ in color $i\in [k]$, and the \dfn{Gallai-Ramsey number} $GR(H_1,\ldots,H_k)$ is defined to be the minimum integer $n$ such that every Gallai $k$-coloring of $K_n$ contains a monochromatic copy of $H_i$ in color $i\in [k]$. We simply write $R_k(H_1)$ and $GR_k(H_1)$ when $H_1=\dots=H_k$, and $R((k-s)H_{s+1},sH_1)$ and $GR_k((k-s)H_{s+1},sH_1)$ when $H_1=\dots=H_s$ and $H_{s+1}=\dots=H_k$. Clearly, $GR_2(F,H)=R(F,H)$ and $GR_k(H) \leq R_k(H)$ for all $k\ge1$. However, determining $GR_k(H)$ for a graph $H$ is far from trivial, even for a small graph. In 2010, the general behavior of $GR_k(H)$ for $H$ was established in \cite{exponential}. Given a Gallai-partition $V_1,\dots, V_\ell$ of a complete graph $G$, we define $\mathcal{G}=G[\{v_1,\dots,v_\ell\}]=K_{\ell}$ as a \dfn{reduced graph} of $G$, where $v_i\in V_i$ for all $i\in[\ell]$. Obviously, there exists a monochromatic copy of $H$ in $\mathcal{G}$ if $\ell\ge R_2(H)$, which leads to a monochromatic copy of $H$ in $G$. 
\begin{thm} [\cite{exponential}]
Let $H$ be a fixed graph  with no isolated vertices. Then $GR_k(H)$ is exponential in $k$ if $H$ is not bipartite, linear in $k$ if $H$ is bipartite but not a star, and constant (does not depend on $k$) when $H$ is a star.		
\end{thm}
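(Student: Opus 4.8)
The plan is to prove the trichotomy by splitting into the three cases according to the structure of $H$: $H$ non-bipartite (equivalently $\chi(H)\ge 3$); $H$ bipartite but not a star; and $H$ a star. Throughout I would use monotonicity of $GR_k$ under taking subgraphs — a monochromatic copy of a graph contains monochromatic copies of all its subgraphs, so $GR_k(H')\le GR_k(H)$ whenever $H'\subseteq H$ — which lets me replace $H$ by a convenient sub- or supergraph in each direction. I also record that a bipartite $H$ which is not a star has no isolated vertices, at least two edges, and is triangle-free, hence cannot be ``intersecting'', so it contains two independent edges, i.e. $H\supseteq 2K_2$.

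\textbf{Non-bipartite $H$.} Put $p=|V(H)|$, so $GR_k(H)\le GR_k(K_p)$. For the upper bound I would iterate Theorem~\ref{Gallai}: applying a Gallai partition recursively inside every part produces a rooted ``Gallai tree'' whose leaves are the $n$ vertices, and at each internal node the children together with the at most two colours between them form a $2$-edge-coloured complete graph (all edges between two children's blobs receiving the single colour dictated by the reduced graph). Hence no internal node can have $\ge R_2(K_p)$ children, since two-colour Ramsey would then give a monochromatic $K_p$ in the reduced graph and so in $K_n$; thus the tree has branching at most $R_2(K_p)-1$. Moreover, along any root-to-leaf path $u_0\supset u_1\supset\cdots\supset u_d$, a fixed colour $c$ can join $u_{i+1}$ to one of its siblings for at most $p-1$ indices $i$: if it did so for $i_1<\cdots<i_p$, picking for each $r$ a vertex $x_r$ in a sibling of $u_{i_r+1}$ that is $c$-joined to $u_{i_r+1}$ gives a set $\{x_1,\dots,x_p\}$ that is pairwise $c$-joined (because $x_s\in u_{i_s}\subseteq u_{i_r+1}$ for $r<s$), a monochromatic $K_p$. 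Since at least one and at most two colours are used between the children at each of the $d$ levels, $d\le (p-1)k$, so $n\le (R_2(K_p)-1)^{(p-1)k}$, exponential in $k$. For the lower bound I would build recursively a Gallai $k$-colouring of $K_{n_k}$ with $n_k=(\chi(H)-1)^{k-1}(p-1)$ and no monochromatic $H$: begin with a one-coloured $K_{p-1}$, and at each step blow up $K_{\chi(H)-1}$, replacing each vertex by a copy of the previous colouring on a fresh palette and colouring all edges between copies with one new colour. No triangle is rainbow (a triangle meeting three copies is monochromatic in the new colour, one meeting two copies repeats it), and each colour is either confined to a single copy or is a complete $(\chi(H)-1)$-partite graph, which cannot contain $H$ as $\chi(H)>\chi(H)-1$; since $\chi(H)-1\ge 2$ this is exponential.

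\textbf{Bipartite $H$, not a star.} Here $H\supseteq 2K_2$, so $GR_k(H)\ge GR_k(2K_2)\ge k+2$: colour $K_{k+1}$ on $v_1,\dots,v_{k+1}$ by giving $v_iv_j$ the colour $\min\{i,j\}$; every colour class is a star (no $2K_2$) and every triangle repeats its smallest-indexed colour (no rainbow triangle). For the upper bound let $t$ be the larger side of a bipartition of $H$, so $H\subseteq K_{t,t}$ and it suffices to bound $GR_k(K_{t,t})$. Given a Gallai $k$-colouring of $K_n$ with no monochromatic $K_{t,t}$, I would take a Gallai partition and observe that at most one part has $\ge t$ vertices, since two such parts are joined in a single colour and would contain a monochromatic $K_{t,t}$; so I peel off the fewer than $R_2(K_{t,t})$ small parts (each with $<t$ vertices) and recurse into the unique large part, until nothing large remains. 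Classifying each peeled small part by the colour joining it to the large part it was removed from, the peeled parts in one class are pairwise joined in that colour, so a single class has total size $<2t$ — otherwise two of its sub-collections, each of size $\ge t$, would span a monochromatic $K_{t,t}$. Summing over the at most $k$ colours bounds the total mass ever peeled by $2tk$, and the final leftover has fewer than $R_2(K_{t,t})\cdot t$ vertices, so $n<2tk+R_2(K_{t,t})\cdot t$, linear in $k$.

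\textbf{$H$ a star $K_{1,t}$.} The lower bound $GR_k(K_{1,t})\ge R_2(K_{1,t})$ is a constant independent of $k$. For the upper bound, in a Gallai partition of $K_n$ the smallest part has at most $n/2$ vertices, so any vertex $v$ in it sends at least $n/2$ edges to other parts, all coloured with one of the at most two colours between the parts; hence some colour appears on at least $n/4$ of them, yielding a monochromatic $K_{1,t}$ as soon as $n\ge 4t$. Thus $GR_k(K_{1,t})\le 4t$, independent of $k$, completing the trichotomy. The step I expect to be most delicate is the linear upper bound in the bipartite non-star case: one must control, colour by colour, exactly how much mass can be peeled off before a monochromatic $K_{t,t}$ is forced, while keeping track of the fact that only two colours are available between parts at each stage and that a part peeled early may be joined to later stages by either of them; verifying that the recursive extremal colouring in the non-bipartite case is Gallai and monochromatic-$H$-free at every colour is routine but must be checked level by level.
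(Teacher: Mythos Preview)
The paper does not prove this theorem; it is quoted from Gy\'arf\'as, S\'ark\"ozy, Seb\H{o} and Selkow as background, so there is no in-paper argument to compare your sketch against, and it must stand on its own.

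Your non-bipartite case (both directions), your star case, and your linear lower bound for bipartite non-star $H$ are all fine. The one genuine slip is in the linear \emph{upper} bound. You classify each peeled small part by the colour $c$ joining it to the large part $L_{i+1}$ at its stage, and then assert that any two parts in the same class are pairwise $c$-joined. This fails for two parts peeled at the \emph{same} stage: if small parts $V_2,V_3$ are both red-joined to the large $V_1$, the reduced-graph edge $V_2V_3$ may perfectly well be blue. So your ``split the class into two halves of size $\ge t$'' justification for the bound $<2t$ does not go through as written --- precisely the point you yourself flagged as delicate.

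The repair is immediate and in fact sharpens the bound. Every class-$c$ part, at whatever stage $i_r$ it was peeled, is $c$-joined to $L_{i_r+1}$, hence to the large part $L$ at the deepest stage at which class $c$ is nonempty (since $L\subseteq L_{i_r+1}$ for all earlier $r$). As $|L|\ge t$, the total size of class $c$ must be $<t$, else a $c$-monochromatic $K_{t,t}$ sits between $\bigcup(\text{class }c)$ and $L$. Summing over the $k$ colours gives total peeled mass $<tk$, and with the final leftover $<R_2(K_{t,t})\cdot t$ you obtain the desired linear bound. With this one-line correction your trichotomy argument is complete.
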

In 2015, Fox, Grinshpun and Pach \cite{FGP} posed a conjecture for $GR_k(K_t)$. It is worth mentioning that the cases $t=3,4$ were proved in \cite{chgr},\cite{exponential} and \cite{k4}. Recently, Magnant and Schiermeyer \cite{colt} have made a breakthrough for the case $t=5$ while there are no any results for the cases $t\ge 6$. More information on this topic can be found in \cite{FGP, FMO}.
\begin{center} 
	
	\def\r{6pt}
	\def\dy{1cm}
	\tikzset{c/.style={draw,circle,fill=black,minimum size=\r,inner sep=0pt,
			anchor=center},
		d/.style={draw,circle,fill=black,minimum size=\r,inner sep=0pt, anchor=center}}
	\scalebox{0.5}{
		\begin{tikzpicture}
		\pgfmathtruncatemacro{\Ncorners}{5}
		\node[ regular polygon,regular polygon sides=\Ncorners,minimum size=3cm] 
		(poly\Ncorners) {};
		\foreach\x in {1,...,\Ncorners}{
			\node[d] (poly\Ncorners-\x) at (poly\Ncorners.corner \x){};
		}

		\foreach\X in {1,...,\Ncorners}{
			\ifnum\X=2
			\foreach\Y in {1,3,5}{
				\draw (poly\Ncorners-\X) -- (poly\Ncorners-\Y);}
			\fi

			\ifnum\X=5
			\foreach\Y in {1}{
				\draw (poly\Ncorners-\X) -- (poly\Ncorners-\Y);}
			\fi
			\ifnum\X=3
			\foreach\Y in {4}{
				\draw (poly\Ncorners-\X) -- (poly\Ncorners-\Y);}
			\fi

		}
		
		\end{tikzpicture}
	}	
	\hfil
	\scalebox{0.5}{
		\begin{tikzpicture}
		\pgfmathtruncatemacro{\Ncorners}{5}
		\node[ regular polygon,regular polygon sides=\Ncorners,minimum size=3cm] 
		(poly\Ncorners) {};
		\foreach\x in {1,...,\Ncorners}{
			\node[d] (poly\Ncorners-\x) at (poly\Ncorners.corner \x){};
		}

		\foreach\X in {1,...,\Ncorners}{
			\ifnum\X=2
			\foreach\Y in {1,3,4,5}{
				\draw (poly\Ncorners-\X) -- (poly\Ncorners-\Y);}
			\fi

			\ifnum\X=5
			\foreach\Y in {1}{
				\draw (poly\Ncorners-\X) -- (poly\Ncorners-\Y);}
			\fi		
			
		}
		
		\end{tikzpicture}
	}		
	\hfil	
	\scalebox{0.5}{
		\begin{tikzpicture}
		\pgfmathtruncatemacro{\Ncorners}{5}
		\node[ regular polygon,regular polygon sides=\Ncorners,minimum size=3cm] 
		(poly\Ncorners) {};
		\foreach\x in {1,...,\Ncorners}{
			\node[d] (poly\Ncorners-\x) at (poly\Ncorners.corner \x){};
		}

		\foreach\X in {1,...,\Ncorners}{
			\ifnum\X=2
			\foreach\Y in {1,3,5}{
				\draw (poly\Ncorners-\X) -- (poly\Ncorners-\Y);}
			\fi

			\ifnum\X=5
			\foreach\Y in {1,4}{
				\draw (poly\Ncorners-\X) -- (poly\Ncorners-\Y);}
			\fi

		}
		
		\end{tikzpicture}
	}
	\hfil	
	\scalebox{0.5}{
		\begin{tikzpicture}
		\pgfmathtruncatemacro{\Ncorners}{5}
		\node[ regular polygon,regular polygon sides=\Ncorners,minimum size=3cm] 
		(poly\Ncorners) {};
		\foreach\x in {1,...,\Ncorners}{
			\node[d] (poly\Ncorners-\x) at (poly\Ncorners.corner \x){};
		}

		\foreach\X in {1,...,\Ncorners}{
			\ifnum\X=2
			\foreach\Y in {1,3,5}{
				\draw (poly\Ncorners-\X) -- (poly\Ncorners-\Y);}
			\fi

			\ifnum\X=5
			\foreach\Y in {1,4}{
				\draw (poly\Ncorners-\X) -- (poly\Ncorners-\Y);}
			\fi
			
			\ifnum\X=3
			\foreach\Y in {4}{
				\draw (poly\Ncorners-\X) -- (poly\Ncorners-\Y);}
			\fi	
			
		}
		
		\end{tikzpicture}
	}
	\hfil
	\scalebox{0.5}{
		\begin{tikzpicture}
		\pgfmathtruncatemacro{\Ncorners}{5}
		\node[ regular polygon,regular polygon sides=\Ncorners,minimum size=3cm] 
		(poly\Ncorners) {};
		\foreach\x in {1,...,\Ncorners}{
			\node[d] (poly\Ncorners-\x) at (poly\Ncorners.corner \x){};
		}
		
		\foreach\X in {1,...,\Ncorners}{
			\ifnum\X=2
			\foreach\Y in {1,3,5}{
				\draw (poly\Ncorners-\X) -- (poly\Ncorners-\Y);}
			\fi

			\ifnum\X=5
			\foreach\Y in {3,4}{
				\draw (poly\Ncorners-\X) -- (poly\Ncorners-\Y);}
			\fi
			\ifnum\X=3
			\foreach\Y in {4}{
				\draw (poly\Ncorners-\X) -- (poly\Ncorners-\Y);}
			\fi

		}
		
		\end{tikzpicture}
	}
	\hfil
	\scalebox{0.5}{
		\begin{tikzpicture}
		\pgfmathtruncatemacro{\Ncorners}{5}
		\node[ regular polygon,regular polygon sides=\Ncorners,minimum size=3cm] 
		(poly\Ncorners) {};
		\foreach\x in {1,...,\Ncorners}{
			\node[d] (poly\Ncorners-\x) at (poly\Ncorners.corner \x){};
		}
		
		\foreach\X in {1,...,\Ncorners}{
			\ifnum\X=2
			\foreach\Y in {3,5}{
				\draw (poly\Ncorners-\X) -- (poly\Ncorners-\Y);}
			\fi

			\ifnum\X=5
			\foreach\Y in {1,3,4}{
				\draw (poly\Ncorners-\X) -- (poly\Ncorners-\Y);}
			\fi
			\ifnum\X=3
			\foreach\Y in {4}{
				\draw (poly\Ncorners-\X) -- (poly\Ncorners-\Y);}
			\fi
		}
		
		\end{tikzpicture}
	}
	\begin{tikzpicture}	
	\node at (-5.8,0) {$H_1$};
	\node at (-3.4,0) {$H_2$};
	\node at (-1,0) {$H_3$};
	\node at (1.3,0) {$H_4$};
	\node at (3.6,0) {$H_5$};
	\node at (6,0) {$H_6$};
	\end{tikzpicture}
\end{center}
\begin{center} 
	
	\def\r{6pt}
	\def\dy{1cm}
	\tikzset{c/.style={draw,circle,fill=black,minimum size=\r,inner sep=0pt,
			anchor=center},
		d/.style={draw,circle,fill=black,minimum size=\r,inner sep=0pt, anchor=center}}
	\scalebox{0.5}{
		\begin{tikzpicture}
		\pgfmathtruncatemacro{\Ncorners}{5}
		\node[ regular polygon,regular polygon sides=\Ncorners,minimum size=3cm] 
		(poly\Ncorners) {};
		\foreach\x in {1,...,\Ncorners}{
			\node[d] (poly\Ncorners-\x) at (poly\Ncorners.corner \x){};
		}
		
		\foreach\X in {1,...,\Ncorners}{
			\ifnum\X=2
			\foreach\Y in {1,3,4,5}{
				\draw (poly\Ncorners-\X) -- (poly\Ncorners-\Y);}
			\fi

			\ifnum\X=5
			\foreach\Y in {1,3,4}{
				\draw (poly\Ncorners-\X) -- (poly\Ncorners-\Y);}
			
			\fi

		}
		
		\end{tikzpicture}
	}
	\hfil
	\scalebox{0.5}{
		\begin{tikzpicture}
		\pgfmathtruncatemacro{\Ncorners}{5}
		\node[ regular polygon,regular polygon sides=\Ncorners,minimum size=3cm] 
		(poly\Ncorners) {};
		\foreach\x in {1,...,\Ncorners}{
			\node[d] (poly\Ncorners-\x) at (poly\Ncorners.corner \x){};
		}
		
		\foreach\X in {1,...,\Ncorners}{
			\ifnum\X=1
			\foreach\Y in {2,3,4,5}{
				\draw (poly\Ncorners-\X) -- (poly\Ncorners-\Y);}
			\fi

			\ifnum\X=2
			\foreach\Y in {3,5}{
				\draw (poly\Ncorners-\X) -- (poly\Ncorners-\Y);}
			\fi
			\ifnum\X=5
			\foreach\Y in {4}{
				\draw (poly\Ncorners-\X) -- (poly\Ncorners-\Y);}
			\fi
			\ifnum\X=3
			\foreach\Y in {4}{
				\draw (poly\Ncorners-\X) -- (poly\Ncorners-\Y);}
			\fi

		}
		
		\end{tikzpicture}
	}
	\hfil
	\scalebox{0.5}{
		\begin{tikzpicture}
		\pgfmathtruncatemacro{\Ncorners}{5}
		\node[ regular polygon,regular polygon sides=\Ncorners,minimum size=3cm] 
		(poly\Ncorners) {};
		\foreach\x in {1,...,\Ncorners}{
			\node[d] (poly\Ncorners-\x) at (poly\Ncorners.corner \x){};
		}

		\foreach\X in {1,...,\Ncorners}{
			\ifnum\X=1
			\foreach\Y in {2,3,4,5}{
				\draw (poly\Ncorners-\X) -- (poly\Ncorners-\Y);}
			\fi

			\ifnum\X=2
			\foreach\Y in {3}{
				\draw (poly\Ncorners-\X) -- (poly\Ncorners-\Y);}
			\fi

			\ifnum\X=4
			\foreach\Y in {5}{
				\draw (poly\Ncorners-\X) -- (poly\Ncorners-\Y);}
			\fi
		}
		
		\end{tikzpicture}
	}
	\hfil
	\scalebox{0.5}{
		\begin{tikzpicture}
		\pgfmathtruncatemacro{\Ncorners}{5}
		\node[ regular polygon,regular polygon sides=\Ncorners,minimum size=3cm] 
		(poly\Ncorners) {};
		\foreach\x in {1,...,\Ncorners}{
			\node[d] (poly\Ncorners-\x) at (poly\Ncorners.corner \x){};
		}

		\foreach\X in {1,...,\Ncorners}{
			\ifnum\X=1
			\foreach\Y in {2,5}{
				\draw (poly\Ncorners-\X) -- (poly\Ncorners-\Y);}
			\fi

			\ifnum\X=2
			\foreach\Y in {5}{
				\draw (poly\Ncorners-\X) -- (poly\Ncorners-\Y);}
			\fi

			\ifnum\X=3
			\foreach\Y in {4}{
				\draw (poly\Ncorners-\X) -- (poly\Ncorners-\Y);}
			\fi
		}
		
		\end{tikzpicture}
	}
	\hfil			
	\scalebox{0.5}{
		\begin{tikzpicture}
		\pgfmathtruncatemacro{\Ncorners}{5}
		\node[ regular polygon,regular polygon sides=\Ncorners,minimum size=3cm] 
		(poly\Ncorners) {};
		\foreach\x in {1,...,\Ncorners}{
			\node[d] (poly\Ncorners-\x) at (poly\Ncorners.corner \x){};
		}

		\foreach\X in {1,...,\Ncorners}{
			\ifnum\X=2
			\foreach\Y in {1,3,4}{
				\draw (poly\Ncorners-\X) -- (poly\Ncorners-\Y);}
			\fi

			\ifnum\X=3
			\foreach\Y in {4}{
				\draw (poly\Ncorners-\X) -- (poly\Ncorners-\Y);}
			\fi

			\ifnum\X=5
			\foreach\Y in {1,3,4}{
				\draw (poly\Ncorners-\X) -- (poly\Ncorners-\Y);}
			\fi
		}
		
		\end{tikzpicture}
	}		
	\hfil
	\scalebox{0.5}{
		\begin{tikzpicture}
		\pgfmathtruncatemacro{\Ncorners}{5}
		\node[draw, regular polygon,regular polygon sides=\Ncorners,minimum size=3cm] 
		(poly\Ncorners) {};
		\foreach\x in {1,...,\Ncorners}{
			\node[d] (poly\Ncorners-\x) at (poly\Ncorners.corner \x){};
		}

		\foreach\X in {1,...,\Ncorners}{
			\ifnum\X=1
			\foreach\Y in {3,4}{
				\draw (poly\Ncorners-\X) -- (poly\Ncorners-\Y);}
			\fi		
			
		}
		
		\end{tikzpicture}
	}
	\begin{tikzpicture}	
	\node at (-6.8,0) {$H_7$};
	\node at (-4.5,0) {$H_8$};
	\node at (-2,0) {$H_9$};
	\node at (0.3,0) {$H_{10}$};
	\node at (2.6,0) {$H_{11}$};
	\node at (5,0) {$H_{12}$};
	\node at (-0.8,-0.8){Fig. 1. All isolated-free graphs with five vertices of chromatic number three.};
	\end{tikzpicture}
\end{center}

Let $\mathscr{H}$ denote a class of isolated-free graphs with five vertices of chromatic number three. It is known that $\mathscr{H}$ contains twelve graphs (see Fig. 1). Recently, the Gallai-Ramsey numbers for many graphs above have been determined in \cite{17} for $H_1$ and $H_2$, \cite{five} for the graphs from $H_3$ to $H_6$, \cite{JZ} for $H_7$, \cite{W2N,YM1} for $H_8$ and \cite{YM} for $H_9$. However, the Gallai-Ramsey numbers for the remaining graphs $H_{10}$, $H_{11}$ and $H_{12}$ are still unknown. Let $\mathscr{F}=\{H_1,H_2,H_3,H_4,H_5,H_6,H_{10},H_{11}\}$. In this paper, we first study the general lower bound of $GR_k(H)$ for all $H\in\mathscr{F}\cup\widehat{K}_m$ with $k\ge1$ and $m\ge2$ in Section 2. Then we give a unified proof to determine the exact values of $GR_k(H)$ for all $H\in \mathscr{F}\cup\{H_{12}\}$ in Section 3 under the enlightenment from \cite{W2N}. Therefore, the Gallai-Ramsey numbers for all graphs in $\mathscr{H}$ are determined. The following theorem is one of our main results.
\begin{thm}\label{thm} 
For all $k\geq1$, if $H\in\mathscr{F}-\{H_{10}\}$, then 
\[GR_k(H)=
\begin{cases}
(R_2(H)-1)\cdot5^{(k-2)/2}+1, & \textup{if $k$ is even,}\\
4\cdot5^{(k-1)/2}+1, & \textup{if $k$ is odd,}
\end{cases}\]
and if $H=H_{10}$, then
$GR_1(H)=5$, $GR_2(H)=7$ and 
\[GR_k(H)=
\begin{cases}
5^{k/2}+1, & \textup{if $k\ge 4$ is even,}\\
2\cdot5^{(k-1)/2}+1, & \textup{if $k\ge3$ is odd.}
\end{cases}\]
\end{thm}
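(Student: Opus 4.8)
The plan is to prove both formulas by induction on $k$, taking the lower bounds from the constructions of Section~2 and concentrating on the matching upper bounds. Two structural facts about the graphs involved will carry most of the argument. First, every $H\in\mathscr F\setminus H_{10}$ has five vertices, contains a triangle, and has at most seven edges; consequently $R_2(H)\ge 9$ (colour $K_8$ so that one colour class is $2K_4$ and the other is $K_{4,4}$: the first has no five-vertex subgraph and the second is triangle-free), and since $\overline H$ then has at least three edges it contains a $P_3$, so $H$ embeds in $K_5-P_3$; one also checks directly that each such $H$ embeds in $K_1+C_4$. Second, $H_{10}=K_3\cup K_2$, so $GR_k(H_{10})\ge GR_k(K_3)$ for every $k$, and the aim for $k\ge 3$ will be to prove equality, using the known value of $GR_k(K_3)$ from \cite{chgr,exponential}.

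For $H\in\mathscr F\setminus H_{10}$ the base cases are $k=1$ (where $GR_1(H)=5$ since $H\subseteq K_5$) and $k=2$ (where $GR_2(H)=R_2(H)$, no triangle being rainbow under two colours). For the inductive step I would take a Gallai $k$-colouring of $K_n$ with $n$ equal to the claimed value, assume it contains neither a rainbow triangle nor a monochromatic $H$, apply Theorem~\ref{Gallai}, and fix a Gallai partition $V_1,\dots,V_\ell$ with $\ell\ge2$ minimum; say colours $1$ and $2$ occur on the reduced graph $\mathcal G=K_\ell$. If $\ell=2$, with all $V_1$--$V_2$ edges in colour $1$ and $|V_1|\ge|V_2|$ (so $|V_1|\ge3$): if both $G[V_1]$ and $G[V_2]$ contain a colour-$1$ edge one assembles a colour-$1$ copy of $K_5-P_3$, hence of $H$; otherwise one of the parts avoids colour $1$ and so carries a Gallai $(k-1)$-colouring on at least $\max(|V_1|,\,n-2)$ vertices, which is at least $GR_{k-1}(H)$ because $R_2(H)\ge 9$, and induction gives $n-1\le 2\bigl(GR_{k-1}(H)-1\bigr)$. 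The cases $\ell=3,4$ reduce to $\ell=2$, except for the $2$-colouring of $K_4$ both of whose colour classes are $P_4$. For $\ell\ge5$ — which absorbs that exceptional $K_4$ — one has $\ell<R_2(H)$ (otherwise $\mathcal G$, and hence $K_n$, already contains a monochromatic $H$), and a more careful analysis of the reduced graph — locating a monochromatic $H$ directly when $\ell$ is large, and otherwise passing to an induced Gallai colouring on a suitable part that uses at most $k-2$ of the colours — yields $n-1\le 5\bigl(GR_{k-2}(H)-1\bigr)$. Substituting the two recursions $GR_k(H)-1\le 5\bigl(GR_{k-2}(H)-1\bigr)$ and $GR_k(H)-1\le 2\bigl(GR_{k-1}(H)-1\bigr)$ into the base cases reproduces the stated values.

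For $H=H_{10}=K_3\cup K_2$, I would first verify $GR_1(H_{10})=5$ and $GR_2(H_{10})=R_2(K_3\cup K_2)=7$ by direct arguments, and then establish $GR_k(H_{10})=GR_k(K_3)$ for $k\ge3$. One inequality is immediate. For the other, take a Gallai $k$-colouring of $K_n$ with $n=GR_k(K_3)$ and no rainbow triangle; it contains a monochromatic triangle, say in colour $1$, and running the Gallai-partition analysis once more I would show that either a colour-$1$ edge disjoint from this triangle can be found — completing a colour-$1$ copy of $K_3\cup K_2$ — or else the colouring is so constrained (few parts, small parts, colour $1$ confined between two parts, and so on) that $n$ would have to be smaller than $GR_k(K_3)$, a contradiction; the claimed formula then follows from the known value of $GR_k(K_3)$.

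The principal obstacle, in both parts, is the $\ell=2$ case together with the step that discards a colour: one must check, through several sub-cases on the sizes of the two parts and on which colours occur inside each of them, that every configuration either produces a monochromatic $H$ outright (this is where the containments $H\subseteq K_5-P_3$, $H\subseteq K_1+C_4$ and the bound $R_2(H)\ge 9$ get used) or reduces to a strictly smaller instance, and to carry this out uniformly for all eight members of $\mathscr F$ at once rather than graph by graph. By comparison, the $\ell\ge5$ regime should reduce to routine bookkeeping with the partition, and the reduction of $H_{10}$ to $K_3$, once the triangle-extension step is in hand, is short.
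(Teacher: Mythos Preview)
Your plan diverges from the paper's in a substantive way. The paper does not split on $\ell$ at all: it first peels off a maximal sequence $U=\{u_1,\dots,u_t\}$ of vertices each monochromatically adjacent to what remains, shows the peeling colours are pairwise distinct (so $|U|\le k$), and only then takes a Gallai partition of $G\setminus U$ into $V_1$ (largest part), $R$ (parts red-adjacent to $V_1$) and $B$ (parts blue-adjacent to $V_1$). A short chain of claims forces $|V_1|\ge 3$ and $|R|,|B|\ge 3$, whence $G[V_1\cup U]$ has neither red nor blue edges and so $|V_1\cup U|\le g(k-2)$; together with $|R|,|B|\le 2|V_1|$ this gives $|G|\le 5g(k-2)<g(k)+1$ in one stroke. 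The $H_{10}$ case is run through the same machinery, with $k\in\{3,4\}$ handled by hand.

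Your $\ell=2$ case has a real gap, and it is exactly the situation the paper's $U$-device is designed to absorb. You write that if not both parts carry a colour-$1$ edge then ``one of the parts avoids colour $1$ and so carries a Gallai $(k-1)$-colouring on at least $\max(|V_1|,n-2)$ vertices''. But the avoiding part may be $V_2$ with $|V_2|\in\{1,2\}$, while $V_1$ (of size $n-1$ or $n-2$) still contains colour-$1$ edges; nothing you have said bounds $|V_1|$. The containment $H\subseteq K_5-P_3$ does not rescue this: with $|V_2|=1$ and a single colour-$1$ edge $uv$ in $V_1$, the colour-$1$ subgraph you can assemble on five vertices is only $K_{1,4}+e$, which fails to contain $H_1$, $H_3$, $H_4$, $H_5$, $H_6$ or $H_{11}$. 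Recursing into $V_1$ does not obviously terminate with the bound $n-1\le 2\,g(k-1)$ you need, because the inner partition can again have $\ell'=2$ with colour $1$ on the cross edges. In the paper this degeneracy never arises: any such singleton or small $V_2$ would already have been swept into $U$, and the claims $|V_1|\ge 3$, $|B|\ge 3$ then preclude the $\ell=2$ scenario outright.

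Your idea for $H_{10}$---proving $GR_k(H_{10})=GR_k(K_3)$ for $k\ge 3$---is pleasant and different from the paper, but the line ``either a disjoint colour-$1$ edge exists or $n<GR_k(K_3)$'' hides the same difficulty: one must exclude colourings where colour $1$ is confined to a single edge or a tiny region, and doing so again seems to require a peeling argument of the paper's type.
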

Note that $K_3\cong\widehat{K}_2$, $K_4-e\cong\widehat{K}_3$ and $H_{12}\cong\widehat{K}_4$. In this paper, we come up with a new approach to construct the general lower bound of $GR_k(H)$ for some $H$, which will provide a direction for us to study the Gallai-Ramsey numbers of $\widehat{K}_m$ with $m\ge2$. So far, the exact values of $GR_k(\widehat{K}_2)$ and $GR_k(\widehat{K}_3)$ have been determined as follows:
\begin{thm}[\cite{chgr},\cite{exponential}] For all $k\ge1$,
\[GR_k(\widehat{K}_2)=\begin{cases}
	5^{k/2} + 1 & \textup{if $k$ is even,}\\
	2\cdot5^{(k-1)/2} + 1 & \textup{if $k$ is odd.}
	\end{cases}
	\]
\end{thm}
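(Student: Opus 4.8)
\emph{Proof idea.} Since $\widehat{K}_2\cong K_3$, the plan is to establish matching lower and upper bounds for $GR_k(K_3)$. For the \textbf{lower bound} I would exhibit, recursively, a Gallai $k$-coloring of a complete graph of the required order that contains no monochromatic triangle. Start from $B_0=K_1$ (no colors) when $k$ is even and from $D_0=K_2$ with its single edge colored $1$ when $k$ is odd; neither has a triangle. Given any such construction $G$ on $N$ vertices, build a new one on $5N$ vertices by taking five disjoint copies of $G$ and coloring all edges between the $i$-th and $j$-th copies by one of two brand-new colors, following the essentially unique $2$-coloring of $K_5$ (a $C_5$ in one new color, its complementary $C_5$ in the other) that has no monochromatic triangle. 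Iterating $k/2$ times from $B_0$ produces $5^{k/2}$ vertices and exactly $k$ colors; iterating $(k-1)/2$ times from $D_0$ produces $2\cdot5^{(k-1)/2}$ vertices and exactly $k$ colors. Each step preserves the Gallai property and triangle-freeness: a triangle inside one copy is handled by induction; a triangle with two vertices in a copy $C$ and one in a copy $C'$ uses the single color between $C$ and $C'$ twice and an old color once, so it is neither rainbow nor monochromatic; and a triangle meeting three distinct copies is colored exactly like a triangle of the reduced $K_5$, hence uses at most two colors and is not monochromatic.

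For the \textbf{upper bound} I would induct on $k$, with base cases $GR_1(K_3)=R(K_3)=3$ and $GR_2(K_3)=R(K_3,K_3)=6$, which match the formula. For $k\ge3$, suppose $G$ is a Gallai $k$-coloring of $K_n$ with no monochromatic triangle and apply Theorem~\ref{Gallai} to get a Gallai partition into parts $V_1,\dots,V_\ell$, $\ell\ge2$, with at most two colors — say $1$ and $2$ — between the parts. Since $G$ has no monochromatic triangle, neither does the $2$-colored reduced graph $K_\ell$, so $\ell\le R(K_3,K_3)-1=5$; moreover for $\ell\ge3$ the reduced graph must use both colors. The key local observation is: if a color $c\in\{1,2\}$ occurs on an edge leaving $V_i$, then $c$ cannot occur inside $V_i$, since a $c$-edge of $V_i$ together with any vertex of the adjacent part would give a $c$-triangle. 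Consequently, if the reduced-graph vertex $v_i$ is incident to both colors then $V_i$ avoids colors $1$ and $2$, so $|V_i|\le GR_{k-2}(K_3)-1$; if $v_i$ is incident to only one color then $|V_i|\le GR_{k-1}(K_3)-1$.

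It remains to assemble these estimates for each $\ell\in\{2,3,4,5\}$, using the easy facts that in a $2$-coloring of $K_4$ or $K_5$ with no monochromatic triangle every vertex sees both colors, while in $K_3$ exactly one vertex sees a single color, and that for $\ell=2$ a single color occurs between the two parts and is absent from both. Thus $n\le 2\bigl(GR_{k-1}(K_3)-1\bigr)$ if $\ell=2$; $n\le \bigl(GR_{k-1}(K_3)-1\bigr)+2\bigl(GR_{k-2}(K_3)-1\bigr)$ if $\ell=3$; and $n\le \ell\bigl(GR_{k-2}(K_3)-1\bigr)$ if $\ell\in\{4,5\}$. Substituting the inductive values of $GR_{k-1}(K_3)$ and $GR_{k-2}(K_3)$, a short parity computation shows each right-hand side is at most $5^{k/2}$ (for $k$ even) or $2\cdot5^{(k-1)/2}$ (for $k$ odd), with equality reached at $\ell=5$ (and also at $\ell=2$ when $k$ is odd). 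Hence $n$ cannot exceed that value, and together with the construction this closes the induction.

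The main obstacle is not a single deep step but the organization: one must note that triangle-freeness of the reduced graph \emph{forces} $\ell\le 5$ for any Gallai partition, apply the "which colors can appear inside $V_i$'' dichotomy to \emph{every} part, and then check the parity arithmetic carefully so that the $\ell=5$ bound (and the $\ell=2$ bound for odd $k$) comes out \emph{exactly} equal to the target — it is this tightness that makes the recursion self-consistent and explains the shape of the extremal constructions.
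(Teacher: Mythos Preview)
The paper does not prove this statement; it is quoted from \cite{chgr} and \cite{exponential} as a known result, so there is no ``paper's own proof'' to compare against. Your argument is correct and is essentially the classical one found in those references: the recursive $K_5$-blow-up for the lower bound, and induction via the Gallai partition with the bound $\ell\le R_2(K_3)-1=5$ on the reduced graph for the upper bound. The case analysis and parity arithmetic you sketch all check out; in particular the tight cases $\ell=5$ (always) and $\ell=2$ (for odd $k$) are exactly where equality is attained.

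It is worth noting that your blueprint is also the template the present paper follows for its own results: Section~2 builds lower-bound colorings by the same $K_5$-substitution scheme (and indeed the formula in Case~\ref{CASE} with $m=2$ recovers $5^{k/2}$), and Section~3 proves upper bounds by Gallai partition plus induction. The paper's upper-bound machinery is more elaborate---it first strips off a maximal set $U$ of ``monochromatic-neighborhood'' vertices and then argues about the sizes of the red and blue neighborhoods $R$ and $B$ of the largest part---because for the graphs $H\in\mathscr{F}$ and $\widehat{K}_4$ the reduced-graph bound $\ell\le R_2(H)-1$ alone is not tight enough. For $K_3$ that extra layer is unnecessary, and your streamlined version is the right level of generality.
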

\begin{thm}[\cite{JZ}]\label{th} 
	For all $k\geq1$, $GR_1(\widehat{K}_3)=4$ and
	\[GR_k(\widehat{K}_3)=
	\begin{cases}
    9\cdot5^{(k-2)/2}+1, & \textup{if $k\ge2$ is even,}\\
   18\cdot5^{(k-3)/2}+1, & \textup{if $k\ge3$ is odd.}
	\end{cases}\]
\end{thm}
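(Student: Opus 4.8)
\medskip\noindent\textbf{Proof proposal.}
The plan is to prove the lower bound by an explicit iterated construction and the upper bound by induction on $k$, using Gallai's partition theorem (Theorem~\ref{Gallai}) as the main tool.

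\emph{Lower bound.} Write $n_k$ for the claimed value. One constructs a Gallai $k$-coloring of $K_{n_k-1}$ with no monochromatic $\widehat{K}_3$ by iterating a substitution. For the even chain, the seed $G_2$ is an extremal $2$-coloring of $K_9$ with no monochromatic $\widehat{K}_3$ (which exists since $R(\widehat{K}_3,\widehat{K}_3)=10$); for the odd chain, the seed $G_3$ is the $3$-coloring of $K_{18}$ obtained by splitting the vertex set into two blocks of size $9$, coloring all $9\times9$ edges between the blocks with a third color, and coloring each block as a copy of $G_2$. One then passes from $G_{k-2}$ to $G_k=C_5[G_{k-2}]$: a $5$-cycle is $2$-colored with two fresh colors (one color on the $C_5$, the other on its complementary $C_5$), and each of its five vertices is replaced by a disjoint copy of $G_{k-2}$. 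Since $|G_k|=5|G_{k-2}|$ one gets $|G_k|=n_k-1$ in both parities. To finish one checks that each $G_k$ is a Gallai coloring — across the parts of a substitution step only the two fresh colors appear, and in $G_3$ a triangle meeting both blocks uses the third color twice and one interior color — and that it has no monochromatic $\widehat{K}_3$: each newly introduced color class is a blow-up of $C_5$ or a complete bipartite graph, hence triangle-free, while a monochromatic $\widehat{K}_3$ in an older color would be confined to a single blown-up part, where none exists by induction (and none exists inside $G_2$ by its choice).

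\emph{Upper bound.} Induct on $k$, with base cases $k=1$ (where $GR_1(\widehat{K}_3)=|\widehat{K}_3|=4$), $k=2$ (where $GR_2(\widehat{K}_3)=R(\widehat{K}_3,\widehat{K}_3)=10$), and $k=3$ handled directly, since the downward step of size two used below would reach $k=1$, which is exceptional. For $k\ge4$, suppose $K_n$ with $n=n_k$ carries a Gallai $k$-coloring with no monochromatic $\widehat{K}_3$; note $n_k-1=5(n_{k-2}-1)$. Take a Gallai partition $V_1,\dots,V_\ell$ with $\ell\ge2$ as small as possible; the reduced graph is a $2$-coloring of $K_\ell$ in two of the colors, say red and blue. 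If $\ell\ge10=R(\widehat{K}_3,\widehat{K}_3)$ the reduced graph contains a monochromatic $\widehat{K}_3$ which lifts to $G$ by taking one vertex per part — a contradiction — so $\ell\le9$. Now come the local facts that drive everything: (i) if the reduced graph has a monochromatic, say red, triangle on parts one of which has two vertices $a,a'$, then with representatives $b,c$ of the other two parts the set $\{a,a',b,c\}$ spans a red $\widehat{K}_3$; hence every part of size $\ge2$ lies in no monochromatic triangle of the reduced graph, so by $R(3,3)=6$ there are at most five such parts and the reduced structure is tightly constrained; (ii) if a part contains a red edge and is red-adjacent to at least two vertices outside it, one again obtains a red $\widehat{K}_3$, so each part either contains no red edge or is red-adjacent to at most one outside vertex, and symmetrically for blue. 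Using (i)--(ii) one is left with a short list of structures. In most of them some part, after deleting one endpoint of each red and each blue matching-edge inside it, still has at least $n_{k-2}$ vertices and uses at most $k-2$ colors, so the inductive hypothesis yields a monochromatic $\widehat{K}_3$ there. A few boundary structures — notably the one mirroring the extremal construction, with five ``heavy'' parts joined in the $C_5$ pattern together with one or two extra vertices — sit exactly at the size threshold, and must be handled by exploiting that $K_n$ has one more vertex than the extremal $K_{n_k-1}$, forcing those surplus vertices into a monochromatic $\widehat{K}_3$.

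The step I expect to be the main obstacle is this last part of the induction: pinning down every way the $2$-colored reduced graph can interact with the colorings inside the parts while avoiding a monochromatic $\widehat{K}_3$, and, crucially, resolving the boundary configurations where the vertex count is tight against $n_k-1=5(n_{k-2}-1)$ — there one cannot simply recurse into a single part and must instead extract a $\widehat{K}_3$ from the few ``surplus'' vertices. By comparison the lower-bound construction and the $\ell\ge10$ reduction are routine, and the bookkeeping of which two colors are spent on the reduced graph versus which survive inside the parts, checked against the size inequalities, is the mechanical part of the write-up.
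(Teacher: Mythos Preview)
This theorem is quoted from \cite{JZ} as background; the present paper does not give its own proof of the $\widehat{K}_3$ upper bound, so there is nothing in the paper to compare your inductive argument against. What the paper \emph{does} contain is the lower bound: your iterated $C_5$-blow-up construction is exactly the special case $m=3$ of the general $\widehat{K}_m$ construction in Section~2 (Case~\ref{C} for even $k$, and Case~\ref{CA} for odd $k$, using $2(R_2(\widehat{K}_3)-1)=18>15=5(|\widehat{K}_3|-1)$ to select the ``two copies of $G_{k-1}$'' option). So on the lower-bound side your proposal and the paper coincide.

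For the upper bound, your outline is the standard Gallai-partition induction and is in the same spirit as the paper's Section~3 arguments for the family $\mathscr{F}$ and for $\widehat{K}_4$. Two differences are worth noting if you carry this out. First, the paper systematically peels off a maximal sequence $U=\{u_1,\dots,u_t\}$ of vertices each monochromatic to the rest before taking the Gallai partition; this device absorbs many of your ``surplus vertex'' boundary cases cleanly, and Claim~\ref{c2} (all $c(u_i)$ distinct) replaces ad hoc reasoning about extra singletons. Second, rather than enumerating reduced structures, the paper controls $|V_1|$, $|R|$, $|B|$ via the inequalities collected in $(\ast)$ and then closes with $|G|\le 5g(k-2)$; the analogue for $\widehat{K}_3$ would need $R_2(\widehat{K}_3)=10$ in place of the values from Theorem~\ref{3}. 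Your local facts (i)--(ii) are correct for $\widehat{K}_3$ and correspond to the paper's Claim~\ref{c1}-type observations, but the paper never runs this machinery for $\widehat{K}_3$ itself, so a direct comparison of the endgame is not possible here.
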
 
In this paper, we prove the following result which implies the exact value of $GR_k(\widehat{K}_4)$. 
\begin{thm}\label{thmmm}
	Let $k\ge 1$ and $s$ be an integer with $0\le s\le k$. Then
	\[GR_k((k-s)P_3,s\widehat{K}_4)=
	\begin{cases}
	2\cdot5^{s/2}+1, & \textup{if $s$ is even and $s<k$,}\\
	2\cdot5^{s/2}, & \textup{if $s$ is even and $s=k$,}\\
	4\cdot5^{(s-1)/2}+1, & \textup{if $s$ is odd.}
	\end{cases}\]
\end{thm}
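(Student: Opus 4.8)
The plan is to prove the two inequalities $GR_k((k-s)P_3, s\widehat K_4) \ge f(k,s)$ and $GR_k((k-s)P_3, s\widehat K_4) \le f(k,s)$ separately, where $f(k,s)$ is the claimed three-case formula. For the lower bound I would build an explicit Gallai $k$-coloring of $K_{f(k,s)-1}$ with no rainbow triangle, no monochromatic $P_3$ in the first $k-s$ colors, and no monochromatic $\widehat K_4$ in the last $s$ colors. Avoiding a monochromatic $P_3$ in color $i$ simply means color $i$ forms a matching, so the $k-s$ ``$P_3$-colors'' contribute essentially nothing multiplicatively; the real construction is the standard iterated blow-up for $\widehat K_4$. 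Since $R_2(\widehat K_4)=R(H_{12},H_{12})$ is a small fixed number — one should use the known value (it is $6$, giving the base block $K_5$ when $s$ is even and a $K_4$-type block giving the factor $4$ when $s$ is odd) — the construction is: start from a $2$-colored $K_5$ (or $K_4$) avoiding monochromatic $\widehat K_4$ in two colors, then substitute each vertex by a copy of the construction for $s-2$ colors, using a fresh pair of colors at each level, and finally handle the parity-$s$ base case and the extra ``$+1$ versus no $+1$'' by whether the remaining colors are $P_3$-colors (a near-perfect matching lets you glue in one extra vertex when $s<k$, but when $s=k$ every vertex must be covered so you lose the $+1$). This matches the three cases; verifying that no monochromatic $\widehat K_4$ arises across levels is routine since $\widehat K_4$ is connected and $2$-chromatic-number-three, so it cannot split across a Gallai part.

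For the upper bound I would induct on $k$, or rather on $s$ together with $k$. Let $n = f(k,s)$ and take any Gallai $k$-coloring of $K_n$ with no rainbow triangle; apply Theorem~\ref{Gallai} to get a Gallai partition $V_1,\dots,V_\ell$ with $\ell\ge 2$ minimal, so the reduced graph $K_\ell$ is $2$-colored, say in colors $1$ and $2$ (after relabeling). The first dichotomy is whether colors $1$ and $2$ are among the $k-s$ ``$P_3$-colors'' or among the $s$ ``$\widehat K_4$-colors,'' or mixed. If color $1$ (say) is a $P_3$-color, then in the reduced graph color $1$ must be a matching, which forces $\ell$ small or forces color $2$ to dominate; in particular one large part $V_1$ carries almost all the vertices and is colored with the remaining $k-1$ colors, and $|V_1| \ge f(k-1,s)$ (or $f(k-1,s-1)$) — here one checks the arithmetic $f(k,s) - (\text{small}) \ge f(k-1,\cdot)$ — and we finish by induction. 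If both colors $1,2$ are $\widehat K_4$-colors, then a monochromatic $\widehat K_4$ (which needs only a handful of suitably connected vertices) will be found once $\ell$ or the part sizes are large enough: concretely, if $\ell \ge 6 = R_2(\widehat K_4)$ we are done immediately, and if $\ell \le 5$ then by pigeonhole some part has $\ge \lceil n/5 \rceil \ge f(k,s-2)+1$ vertices — this is the key numeric inequality, $f(k,s) > 5\, f(k,s-2)$ in the relevant ranges — and induction on that part (now using $k-2$ colors total, $s-2$ of them $\widehat K_4$-colors) finishes, provided we also rule out a monochromatic $\widehat K_4$ being pieced together from one vertex of a large part plus a monochromatic structure in the reduced graph. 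That last gluing argument — showing that when $\ell\in\{3,4,5\}$ and the color-$1$/color-$2$ pattern on $K_\ell$ is moderately rich, combining it with a few vertices inside parts already yields $\widehat K_4$ in color $1$ or $2$ — is the part that needs genuine care.

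The main obstacle I anticipate is precisely this interface between the reduced graph and the parts in the $\widehat K_4$-case. Because $\widehat K_4$ has five vertices but only four non-edges/limited structure ($K_1 + P_4$), it is ``cheap'': a single vertex blue-adjacent to a blue $P_4$ already builds it, and a blue $K_{1,3}$ plus one more blue edge suffices. So when the reduced graph $K_\ell$ with $\ell=5$ has, say, a monochromatic path or star in color $1$ together with any part of size $\ge 2$ in color $1$, one must carefully enumerate the Gallai structures on $K_5$ in two colors (there are few) and in each case either locate $\widehat K_4$ directly or push enough vertices into one part to recurse. I would organize this as a short case analysis on the structure of the $2$-colored reduced graph $K_\ell$ for $\ell\le 5$ — exactly in the spirit of the ``unified proof'' the authors promise, borrowing the method of~\cite{W2N} — and keep the arithmetic lemma $f(k,s)-1 = 5(f(k,s-2)-1)$ for $s$ even (and the analogous odd-$s$, and $s=k$ boundary identities) on hand to make every reduction exact rather than off-by-one. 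The $P_3$-colors, by contrast, should be easy throughout: a $P_3$-color being a matching means it can never be the dominant color on a large reduced graph, so those colors are quickly peeled off one at a time, each peel reducing $k$ by $1$ and $n$ by at most $1$, matching the ``$+1$'' bookkeeping and in particular the distinction between the $s<k$ and $s=k$ even cases.
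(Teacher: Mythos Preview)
Your overall architecture---iterated $K_5$ blow-ups for the lower bound, induction plus Gallai partition for the upper bound---matches the paper's, but one concrete factual error derails the upper-bound plan as written: $R_2(\widehat K_4)=10$, not $6$ (Theorem~\ref{3}; recall $\widehat K_4=H_{12}=K_1+P_4$). The factor of $5$ in the formula comes not from $R_2(\widehat K_4)-1$ but from the fact that a $2$-colored $K_5$ can avoid a monochromatic \emph{triangle}, hence a fortiori $\widehat K_4$; so your lower-bound construction survives, but the reasoning for it needs that correction.

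The real damage is upstairs. With $R_2(\widehat K_4)=10$ you only get $\ell\le 9$ in the reduced graph, and plain pigeonhole then gives $|V_1|\ge n/9$, far short of the $n/5$ you need to recurse into one part with two fewer $\widehat K_4$-colors; your ``short case analysis on $K_\ell$ for $\ell\le 5$'' has no footing. The paper closes this gap not by enumerating $2$-colorings of $K_\ell$ for $\ell\le 9$, but structurally: first peel off a maximal set $U$ of vertices each monochromatically joined to the rest (and show $|U|\le s$); then, with $V_1$ the largest remaining part and $R,B$ the unions of parts red- resp.\ blue-adjacent to $V_1$, prove $|V_1|,|B|\ge 2$ and that both reduced-graph colors are $\widehat K_4$-colors. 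The crucial observation is that inside $R$ at most two Gallai parts can have size $\ge 2$ (three such parts would already carry a blue $\widehat K_4$), and any singleton parts in $R$ are tightly constrained; likewise for $B$. These ``facts 1--3'' force either $|R|,|B|\le |V_1|+2$, which finishes by a direct count, or exactly $\ell=5$; and when $\ell=5$, the value $R_2(\widehat K_4)=10$ is used once more to guarantee that at least one of the five parts contains neither a red nor a blue edge, giving the tight bound $|G|\le 4\,w(k-1,s-2)+w(k-2,s-2)=w(k,s)$. So your intuition that the endgame is a five-part picture is correct, but reaching $\ell=5$ is precisely where the work lies, and it is not delivered by $R_2(\widehat K_4)$.
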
 
One can obtain the Gallai-Ramsey number of $\widehat{K}_4$ from Theorem \ref{thmmm} by setting $s=k$. It is obviously true that $GR_1(\widehat{K}_m)=m+1$. With the support of the general lower bound of $GR_k(\widehat{K}_m)$ given in Section 2 and the exact values of $GR_k(\widehat{K}_m)$ for $2\le m\le 4$, we propose a conjecture as follows:
\begin{conj}
For all $k\ge2$ and $m\ge2$,
	\[GR_k(\widehat{K}_m)=
	\begin{cases}
	(R_2(\widehat{K}_m)-1)\cdot5^{(k-2)/2}+1, & \textup{if $k$ is even and $m$ is odd,}\\
	R_2(\widehat{K}_m)+(m/2)\cdot(5^{k/2}-5), & \textup{if $k$ is even and $m$ is even,}\\
	max\{2(R_2(\widehat{K}_m)-1),5m\}\cdot5^{(k-3)/2}+1, & \textup{if $k\ge3$ is odd.}
	\end{cases}\]	
\end{conj}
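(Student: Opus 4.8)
The plan is to prove matching lower and upper bounds, with the lower bound supplied by the constructions of Section~2 and the upper bound established by strong induction on $k$ via Theorem~\ref{Gallai}, reducing two colors at a time. For the lower bound, the value in each case should be realized by one of the extremal Gallai colorings of Section~2. For $k$ odd the two quantities inside the maximum, namely $2(R_2(\widehat K_m)-1)$ and $5m$, correspond to two genuinely different colorings: one obtained by joining two copies of an optimal $(k-1)$-color coloring of $K_{R_2(\widehat K_m)-1}$ by a fresh color placed as a complete bipartite graph (which contains no $\widehat K_m$ at all, since $\widehat K_m$ has triangles and is not bipartite), and one built from a separate block of size $5m$ that is more efficient precisely when $m$ is large relative to $R_2(\widehat K_m)$. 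That neither dominates for all $m$ -- for instance $2(R_2-1)=18>15=5m$ when $m=3$, while $5m=20>18=2(R_2-1)$ when $m=4$ -- is exactly why the maximum cannot be removed. For $k$ even, the split between $m$ odd and $m$ even records that the clean iterated $5$-fold blow-up of the $R_2$-extremal coloring is optimal only for odd $m$; for even $m$ a distinct construction, growing additively as $(m/2)(5^{k/2}-5)$, overtakes it, which is the source of the non-multiplicative recursion in that case.

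For the upper bound I would induct on $k$. The base cases are $GR_1(\widehat K_m)=m+1$, immediate since $\widehat K_m$ has $m+1$ vertices, and $GR_2(\widehat K_m)=R_2(\widehat K_m)$ by definition; the cases $m\le 4$ are then verified against the known Theorems~\ref{th} and~\ref{thmmm}. For the inductive step, take a Gallai $k$-coloring of $K_n$ with $n$ equal to the conjectured value, assume there is no rainbow triangle, and apply Theorem~\ref{Gallai} to get a partition $V_1,\dots,V_\ell$ whose reduced graph $\mathcal G=K_\ell$ uses only two colors, say red and blue, chosen with $\ell$ minimum. If $\ell\ge R_2(\widehat K_m)$ then $\mathcal G$ contains a monochromatic $\widehat K_m$, which lifts to $G$ by choosing one vertex per part; so we may assume $\ell\le R_2(\widehat K_m)-1$ and the largest part $V_1$ is correspondingly large. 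The observation driving the color reduction is that if $V_1$ is red-adjacent to some nonempty part, then any vertex outside $V_1$ red-adjacent to it can serve as the hub of a red $\widehat K_m$ built on a suitable short red path inside $V_1$; hence $V_1$ must avoid that red configuration, and likewise in blue. Thus $V_1$ behaves as a Gallai $k$-coloring that avoids a short monochromatic path in the two reduced colors while still avoiding $\widehat K_m$ in the remaining $k-2$ colors, so bounding $|V_1|$ becomes a mixed Gallai-Ramsey problem of exactly the type solved for $m=4$ in Theorem~\ref{thmmm}. Running this mixed recursion yields the factor $5$ per two colors and, after reconciling with the base cases, the claimed closed forms.

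The main obstacle is making the mixed recursion tight for every $m$. First, one must pin down the exact length of the residual monochromatic path that the large part is forced to avoid: for $\widehat K_4$ this turns out to be $P_3$ rather than $P_4$, as reflected in Theorem~\ref{thmmm}, and identifying the correct residual path length for general $m$ together with the corresponding hub-assembly lemma is the crux. Second, the even/odd-$m$ dichotomy must survive the induction: the additive growth in the even-$m$ case means the large-part bound is not a clean multiple of $GR_{k-2}$, so the inductive hypothesis has to be strengthened to the full family of mixed numbers $GR_k((k-s)P_t,s\widehat K_m)$, and the maximum in the odd-$k$ formula forces a case split according to which base construction is extremal. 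A final, separate, difficulty is that the statement is phrased in terms of $R_2(\widehat K_m)$, whose exact value is unknown for general $m$, so the conjecture can only be reduced to, not decoupled from, the determination of these two-color Ramsey numbers. These are presumably the reasons the result is established only for $m\le4$ and the general case is left as a conjecture.
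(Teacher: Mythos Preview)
This statement is a \emph{conjecture} in the paper, not a theorem; there is no proof of it to compare against. What the paper actually establishes is (i) the lower bound $GR_k(\widehat K_m)\ge g(k)+1$ for all $m\ge2$, via the constructions of Section~2, and (ii) the matching upper bound only for $m\le4$, with $m=4$ handled through Theorem~\ref{thmmm}. Your outline tracks both parts faithfully: your description of the two competing odd-$k$ constructions and of the additive even-$m$ construction is exactly the content of Cases~\ref{CA} and~\ref{CASE}, and your proposed strengthening of the inductive hypothesis to mixed numbers $GR_k((k-s)P_t,s\widehat K_m)$ is precisely the device the paper uses for $m=4$ (with $t=3$) in Theorem~\ref{thmmm}.

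One small inaccuracy: for odd $k\ge5$ the first construction is not ``two copies of an optimal $(k-1)$-color coloring of $K_{R_2(\widehat K_m)-1}$'' but two copies of the full $(k-1)$-color construction $G_{k-1}$ on $(R_2(\widehat K_m)-1)\cdot 5^{(k-3)/2}$ vertices; your description is correct only at the base $k=3$. Beyond that, since you explicitly flag the unresolved obstacles---the correct residual path length for general $m$, the non-multiplicative recursion in the even-$m$ case, and the dependence on the unknown $R_2(\widehat K_m)$---your write-up is best read as an informed commentary on why the statement remains conjectural rather than as a proof, and on that reading it is accurate and in agreement with the paper.
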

We shall make use of the following theorems in the proof of our main results.
\begin{thm}[\cite{4}]\label{FD}
	$GR_k(P_3)=3$ for all $k\ge1$.
\end{thm}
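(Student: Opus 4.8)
The plan is to establish the two inequalities $GR_k(P_3)\ge 3$ and $GR_k(P_3)\le 3$ separately, both of which are essentially immediate for such a small target graph.

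For the lower bound, I would observe that $P_3$ has three vertices and two edges, so any copy of $P_3$ requires at least three vertices. Hence $K_2$, which has a single edge, contains no copy of $P_3$ whatsoever, monochromatic or otherwise. Any $k$-coloring of $K_2$ is vacuously a Gallai coloring (there is no triangle at all, rainbow or not) and contains no monochromatic $P_3$. This witnesses that $n=2$ is too small, so $GR_k(P_3)\ge 3$.

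For the upper bound, I would take an arbitrary Gallai $k$-coloring of $K_3$ and show it must contain a monochromatic $P_3$. The triangle $K_3$ has exactly three edges. By the defining property of a Gallai coloring, these three edges do not receive three distinct colors (otherwise they would form a rainbow triangle). Therefore at least two of the three edges carry the same color. The key structural remark is that any two edges of a triangle share a common vertex; so two equally-colored edges of $K_3$ automatically form a monochromatic $P_3$. This gives $GR_k(P_3)\le 3$, and combining with the lower bound yields $GR_k(P_3)=3$ for every $k\ge 1$.

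There is no real obstacle here: the result holds uniformly in $k$ precisely because the Gallai condition on a single triangle already forces a repeated color, and the combinatorics of $K_3$ turn any repeated color into the desired path. One need not invoke Theorem~\ref{Gallai} or any reduced-graph machinery, since the argument is purely local to a single triangle; the only point worth stating cleanly is the observation that distinct edges of a triangle always meet, so ``two edges of the same color'' and ``a monochromatic $P_3$'' are equivalent inside $K_3$.
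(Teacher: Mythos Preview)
Your proof is correct. The paper itself does not give a proof of this statement; it simply quotes the result from \cite{4} and uses it as a tool in later arguments, so there is no proof in the paper to compare against. Your elementary argument (lower bound from $|P_3|=3$, upper bound from the pigeonhole observation that a non-rainbow triangle has two edges of the same color sharing a vertex) is exactly the standard one-line justification and is entirely adequate.
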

\begin{thm}[\cite{R2}]\label{3}
	$R_2(H_{10})=7$, $R_2(H^{'})=9$ and $R_2(H^{''})=10$, where $H^{'}\in\{H_1,H_2,H_3,H_4\}$ and $H^{''}\in\{H_5,H_6,H_{11},H_{12}\}$. 
\end{thm}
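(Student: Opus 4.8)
The plan is to treat the nine graphs named in the statement in three groups according to their claimed Ramsey value, namely $\{H_{10}\}$ with value $7$, $\{H_1,H_2,H_3,H_4\}$ with value $9$, and $\{H_5,H_6,H_{11},H_{12}\}$ with value $10$, and to prove a matching lower and upper bound for each group. Two structural facts drive the argument: every one of these nine graphs contains a triangle (each has chromatic number three on at most five vertices, and none of them is $C_5$, the unique triangle-free such graph), and the four graphs with value $10$ each contain the diamond $K_4\setminus\{e\}$, while among the value-$9$ graphs only the house $H_4$ contains a $C_5$. I would use as black boxes the classical two-colour values $R_2(K_3)=6$, $R_2(C_5)=9$, and $R_2(K_4\setminus\{e\})=10$, together with the monotonicity $R_2(F)\le R_2(H)$ whenever $F\subseteq H$.

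For the lower bounds I would exhibit explicit critical colourings. For $H_{10}=K_3\cup K_2$, colour $K_6$ by letting red be a $K_4$ on four of the vertices with the remaining two vertices red-isolated. Then any red triangle lies inside the red $K_4$ and leaves no red edge on the three vertices outside it, while any blue triangle must use both red-isolated vertices and likewise leaves no disjoint blue edge, so neither colour contains $K_3\cup K_2$ and $R_2(H_{10})\ge 7$. For $H'\in\{H_1,H_2,H_3,H_4\}$, colour $K_8$ with red $=2K_4$ and blue $=K_{4,4}$: each $H'$ is connected on five vertices, so it cannot embed into a red component of order four, and each $H'$ has a triangle, so it cannot embed into the bipartite blue graph; hence $R_2(H')\ge 9$. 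For $H''\in\{H_5,H_6,H_{11},H_{12}\}$, since $K_4\setminus\{e\}\subseteq H''$, the extremal colouring witnessing $R_2(K_4\setminus\{e\})\ge 10$ --- the self-complementary $3\times3$ rook (Paley) colouring of $K_9$, in which every edge lies in a unique monochromatic triangle and hence no monochromatic diamond exists --- contains no monochromatic $K_4\setminus\{e\}$ and therefore no monochromatic $H''$, giving $R_2(H'')\ge 10$.

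For the upper bounds I would, in each case, locate a small monochromatic seed guaranteed by the black-box values and then extend it to the target graph, switching colours when the first-colour extension fails. For $H_{10}$ on $K_7$: a monochromatic triangle exists; if the four remaining vertices carry any edge of that colour we are done, and otherwise they induce a monochromatic $K_4$ in the other colour, from which a short analysis of the edges back to the triangle produces the required disjoint edge. For $H'$ on $K_9$: seed a monochromatic triangle (available since $9\ge6$) and attach the prescribed pendant vertices or pendant path using the six spare vertices, distributing their edge colours by pigeonhole; for the house $H_4$ it is cleaner to seed a monochromatic $C_5$ (available since $R_2(C_5)=9$) and then secure a chord of the same colour among the five cycle vertices, relocating via the spare vertices if it fails. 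For $H''$ on $K_{10}$: seed a monochromatic $K_4\setminus\{e\}$ (available since $R_2(K_4\setminus\{e\})=10$) and attach a fifth vertex carrying the one, two, or three edges of the correct colour prescribed by the particular $H''$.

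The main obstacle is the upper-bound extension, and specifically the colour-switching bookkeeping once the preferred colour fails to extend the seed. The most delicate point is separating the value-$9$ graph $H_4$ from the value-$10$ graphs $H_5,H_6$, which all have six edges: the argument must exploit that $H_4$ contains a $C_5$ (so a $C_5$-seed already suffices on $K_9$) whereas $H_5$ and $H_6$ do not, so that their upper bound genuinely requires the diamond seed and an extra vertex on $K_{10}$. Verifying that in every failed-extension branch one of the two colours still realises the target graph, and doing so uniformly across the four graphs in each group, is where essentially all of the work lies; this is the step I expect to demand the most detailed case checking.
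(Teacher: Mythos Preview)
The paper does not prove this statement at all: Theorem~\ref{3} is quoted from Hendry~\cite{R2} as a known two-colour Ramsey result and is used only as input to the Gallai--Ramsey arguments. There is therefore no ``paper's own proof'' to compare your proposal against; any correct argument you supply would be supplementary to, not a reconstruction of, what the paper does.

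That said, your proposal is a sensible blueprint. The lower-bound constructions are all valid: the $K_4$-plus-two-isolates colouring of $K_6$ indeed avoids a monochromatic $K_3\cup K_2$; the $2K_4$ versus $K_{4,4}$ split of $K_8$ avoids every connected triangle-carrying $5$-vertex graph; and the Paley$(9)$ colouring (strongly regular with parameters $(9,4,1,2)$, hence every edge in exactly one monochromatic triangle) avoids the diamond and hence every $H''$. Your identification of which graphs contain $C_5$ and which contain $K_4\setminus\{e\}$ is correct and is the right way to separate the $9$-group from the $10$-group.

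The upper-bound side is where your proposal is only a sketch, and you are right that the colour-switching extension is the real work. One concrete gap to watch: for $H_4$ (the house) on $K_9$, seeding a monochromatic $C_5$ via $R_2(C_5)=9$ is fine, but ``securing a chord of the same colour among the five cycle vertices, relocating via the spare vertices if it fails'' is not automatic---if all five chords of the $C_5$ are the other colour, those chords themselves form a $C_5$ of the second colour, and you must then argue for a chord of \emph{that} cycle, which does not obviously terminate without invoking the four spare vertices more carefully. Similarly, for the $H''$ graphs on $K_{10}$, attaching ``one, two, or three edges of the correct colour'' to a diamond seed can fail simultaneously at every candidate fifth vertex, and the fallback to the other colour then needs its own diamond, which you have not yet located. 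None of this is fatal, but each branch requires genuine casework that Hendry carried out and that your outline does not yet contain.
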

\begin{thm}[\cite{Kapps}]\label{5}
	$R(P_3,\widehat{K}_4)=5$.
\end{thm}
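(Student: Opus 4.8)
My plan is to prove matching lower and upper bounds, inducting on the number $s$ of colours that forbid $\widehat{K}_4$ while recording only whether a spare $P_3$-colour is present (the case $s<k$) or not (the case $s=k$). The two parities of $s$ are handled in parallel, each by a step that decreases $s$ by $2$. The base cases are $s=0$, where the statement reads $GR_k(kP_3)=GR_k(P_3)=3$ and is exactly Theorem \ref{FD}, and $s=1$, where the claimed value $5$ must be matched with $R(P_3,\widehat{K}_4)=5$ from Theorem \ref{5}. At $s=1$ the only extra point is that the additional $P_3$-colours available when $k\ge2$ cannot enlarge the extremal configuration: in any Gallai colouring of $K_5$ each $P_3$-colour is a matching, so most edges fall into the single $\widehat{K}_4$-colour, and Theorem \ref{Gallai} then forces a monochromatic $\widehat{K}_4$ in that colour.

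For the lower bound I would build the extremal colourings recursively, starting from the trivial good colouring of $K_2$ (one $P_3$-coloured edge) and using three operations, each preserving the Gallai property and avoiding the forbidden subgraphs. The factor-$5$ step adjoins two fresh $\widehat{K}_4$-colours by substituting the current colouring into each vertex of a $K_5$ whose ten edges are split into two complementary $5$-cycles; each new colour class then carries only five edges among the five blobs, fewer than the seven edges of $\widehat{K}_4$, so no monochromatic $\widehat{K}_4$ is created between blobs. This produces the powers $5^{s/2}$ and $5^{(s-1)/2}$. To pass from an even value of $s$ to the next odd one I would take two disjoint copies of the current colouring and join them completely in one new $\widehat{K}_4$-colour; since $\widehat{K}_4$ is not bipartite, a complete bipartite colour class contains no $\widehat{K}_4$, and this doubling explains the jump from the leading constant $2$ (even $s$) to $4$ (odd $s$). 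Finally, when $s<k$ a single spare $P_3$-colour may be attached as one extra edge, as a colour-twin joined by a $P_3$-edge, to gain one further vertex, producing the $+1$ that is absent when $s=k$.

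For the upper bound I take a Gallai $k$-colouring of $K_N$, with $N$ the claimed value, and fix a Gallai partition $V_1,\dots,V_\ell$ with $\ell\ge2$ as small as possible; by Theorem \ref{Gallai} the reduced $K_\ell$ uses only two colours, say $r$ and $b$. The proof is a case analysis on the types of $r$ and $b$. If one of them, say $r$, is a $P_3$-colour, then every reduced $r$-edge must join two singleton parts, since otherwise a vertex of one part sees two vertices of the other in colour $r$, a monochromatic $P_3$; this collapses the partition enough to expose a part on which I can apply induction with one fewer active $P_3$-colour. The decisive case is when both $r$ and $b$ forbid $\widehat{K}_4$. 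Here any monochromatic triangle in the reduced graph whose three parts all have size at least two spans a complete tripartite $K_{2,2,2}$, and $K_{2,2,2}$ contains $\widehat{K}_4$; hence the parts of size at least two carry a $2$-colouring of their reduced complete graph that is triangle-free in each colour, so there are at most $R(K_3,K_3)-1=5$ of them, and in the extremal case they sit in the two-$C_5$ pattern.

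The main obstacle is to convert this ``at most five large parts'' into the exact factor $5$ of the recursion $N-1=5\,(N'-1)$, where $N'$ is the value for $s-2$. The difficulty is that the two reduced $\widehat{K}_4$-colours are not forbidden inside a large part; a short argument — a colour-$r$ path $P_4$ inside a part, capped by a vertex of an $r$-adjacent part, would complete a $\widehat{K}_4$ — shows only that $r$ and $b$ must be $P_4$-free within each large part, which is weaker than removing them. Pushing this $P_4$-freeness, together with the rigidity of the two-$C_5$ pattern on the five large parts, down to the clean bound $|V_i|\le N'-1$ for each large part is the heart of the proof, and it is also where the additive constant must be tracked: the single spare $P_3$-colour, present exactly when $s<k$, is what permits one extra vertex and so distinguishes $2\cdot5^{s/2}+1$ from $2\cdot5^{s/2}$, with the analogous accounting in the odd case. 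Finally I would confirm that the recursive constructions of the lower bound realise equality throughout, so that the two bounds meet at the stated value.
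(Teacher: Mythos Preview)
There is a fundamental mismatch. The statement you were asked to prove is Theorem~\ref{5}, the classical two-colour Ramsey number $R(P_3,\widehat{K}_4)=5$. The paper does not prove this at all: it is quoted from \cite{Kapps} and used only as a known input in the proofs of Theorems~\ref{thm} and~\ref{thmmm}. A self-contained proof fits in two lines --- any $2$-colouring of $K_4$ trivially avoids $\widehat{K}_4$ since $|\widehat{K}_4|=5$, giving the lower bound; and in any $2$-colouring of $K_5$, a colour class with no $P_3$ is a matching of size at most two, and $K_5$ minus two independent edges still contains the join of the unmatched vertex with a $P_4$ on the remaining four, i.e.\ a $\widehat{K}_4$.

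What you have actually sketched is a proof of Theorem~\ref{thmmm}, the Gallai--Ramsey result for $GR_k((k-s)P_3,s\widehat{K}_4)$. You even invoke Theorem~\ref{5} explicitly as your base case $s=1$, so your argument \emph{assumes} the very statement you were meant to establish. As an outline of Theorem~\ref{thmmm} your plan is broadly aligned with the paper's approach --- recursive lower-bound constructions via $K_5$-blowups and doubling, and an inductive upper bound through a minimal Gallai partition --- but none of that is relevant to, or constitutes a proof of, the statement $R(P_3,\widehat{K}_4)=5$.
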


\section{Construction of a general lower bound}
In this section, we first give a general lower bound of $GR_k(H)$ for $H\in\mathscr{F}\cup\widehat{K}_m$ with $k\ge1$ and $m\ge2$ by construction. Suppose $H=H_{10}$ and $k\ge1$. Let $g(1)=|H|-1=4$, $g(2)=R_2(H)-1=6$ as $R_2(H)=7$ by Theorem \ref{3}, and let 
\[g(k)=\begin{cases}
5^{k/2}, & \textup{if $k\ge 4$ is even,}\\
2\cdot5^{(k-1)/2}, & \textup{if $k\ge3$ is odd.}
\end{cases}\]
Since $GR_1(H_{10})=|H_{10}|$, $GR_2(H_{10})=R_2(H_{10})$ and $K_3$ is a subgraph of $H_{10}$, we have $GR_k(H_{10})\ge g(k)+1$ for all $k\ge1$. Suppose $H\in\mathscr{F}-\{H_{10}\}$ and $k\ge1$. Let
\[
g(k)=
\begin{cases}
(R_2(H)-1)\cdot 5^{(k-2)/2},& \text{ if }  k  \text{ is even,}\\
4\cdot 5^{(k-1)/2},& \text{ if } k  \text{ is odd.}
\end{cases}
\]
Suppose $H=\widehat{K}_m$ for $m\ge2$ and $k\ge1$. Let $g(1)=|H|-1=m$ and	
\[
g(k)=
\begin{cases}
(R_2(H)-1)\cdot5^{(k-2)/2}, & \textup{if $k$ is even and $m$ is odd,}\\
R_2(H)+(m/2)\cdot(5^{k/2}-5)-1, & \textup{if $k$ is even and $m$ is even,}\\
max\{2(R_2(H)-1),5m\}\cdot5^{(k-3)/2}, & \textup{if $k\ge3$ is odd.}
\end{cases}
\] 	

In this paper, we use $(G,c)$ to denote a complete graph $G$ under the Gallai coloring $c$. For all $k\ge1$ and $m\ge2$, we now construct the complete graph $(G_k,c_k)$ on $g(k)$ vertices recursively which contains no monochromatic copy of $H\in \widehat{K}_m\cup(\mathscr{F}-\{H_{10}\}$), where $c_k: E(G_k)\xrightarrow{} [k]$ is a Gallai $k$-coloring. Let $K_5$ be colored with colors $k-1$ and $k$ without monochromatic triangle and $(G_{k-2},c_{k-2})$ be the construction on $g(k-2)$ vertices with colors in $[k-2]$. We consider the following three cases. One can obtain a general lower bound of $GR_k(H)$ for $H\in\mathscr{F}-\{H_{10}\}$ from the construction in Cases 1 and 2, and $H=\widehat{K}_m$ from the construction in Cases 1-3.
\begin{case}\label{C}
$k$ is even and $m$ is odd for all $H\in \widehat{K}_m\cup(\mathscr{F}-\{H_{10}\})$.
\end{case}
For $k=2$, let $G_2$ be 2-colored complete graph on $g(2)=R_2(H)-1$ vertices containing no monochromatic copy of $H$ under $c_2: E(G_2)\xrightarrow{} \{1,2\}$. For all even $k\ge4$, let $(G_k,c_k)$ be the construction obtained by replacing each vertex of $K_5$ with a copy of $(G_{k-2} ,c_{k-2})$ such that the colors on all edges between the corresponding copies of $(G_{k-2},c_{k-2})$ are consistent with $K_5$.
\begin{case}\label{CA}
$k$ is odd for all $H\in \widehat{K}_m\cup(\mathscr{F}-\{H_{10}\})$.
\end{case}
For $k=1$, let $G_1$ be 1-colored complete graph on $g(1)=|H|-1$ vertices containing no monochromatic copy of $H$ under $c_1: E(G_1)\xrightarrow{} \{1\}$. For all odd $k\ge3$, let $(G_{k-1},c_{k-1})$ be the construction obtained from Case \ref{C}. Note that $(G_{k-1},c_{k-1})$ is also obtained by same method as in Case \ref{C} for even $m$. If $2(R_2(H)-1)\ge5(|H|-1)$, then let $(G_{k},c_{k})$ be the construction obtained by joining two copies of $(G_{k-1},c_{k-1})$ such that all edges between two copies are colored with color $k$. If $2(R_2(H)-1)<5(|H|-1)$, then let $(G_k,c_k)$ be the construction obtained from the copies of $(G_{k-2},c_{k-2})$ by the same method as in Case \ref{C} under $K_5$.  
\begin{case}\label{CASE}
	$k$ is even and $m$ is even for $H=\widehat{K}_m$.
\end{case}
For $k=2$, let $G_2$ be 2-colored complete graph on $g(2)=R_2(\widehat{K}_m)-1$ vertices containing no monochromatic copy of $\widehat{K}_m$ under $c_2: E(G_2)\xrightarrow{} \{1,2\}$. For all even $k\ge4$, let $(G_k,c_k)$ be the construction obtained from the copies of $(G_{k-2},c_{k-2})$ and the following copies of $G^{'}_{k-1}$ and $G^{''}_{k-1}$ by the same method as in Case \ref{C} under $K_5$.\medskip

We first construct the Gallai $(k-1)$-colored complete graph $G^{'}_{k-1}$ ($resp$. $G^{''}_{k-1}$) for even $k\ge4$ with colors in $[k-1]$ ($resp$. $[k]\setminus\{k-1\}$) recursively which contains neither monochromatic copy of $\widehat{K}_m$ in color $i\in [k-2]$ nor $S_{m/2}$ in color $k-1$ ($resp$. $k$). Let $G^{'}$ ($resp$. $G^{''}$) be a complete graph on $m/2$ vertices with the only color $k-1$ ($resp$. $k$). Clearly, $G^{'}$ ($resp$. $G^{''}$) contains no $S_{m/2}$ in color $k-1$ ($resp$. $k$). For all even $k\ge4$, let $G^{'}_{k-3}$ ($resp$. $G^{''}_{k-3}$) be the construction with colors in $[k-4]\cup\{k-1\}$ ($resp$. $[k-4]\cup\{k\}$) such that $G^{'}_1=G^{'}$ ($resp$. $G^{''}_1=G^{''}$) if $k=4$. Let $K^{1}_5$ (a complete graph of order five) be colored with colors $k-3$ and $k-2$ without monochromatic triangle. Let $G^{'}_{k-1}$ ($resp$. $G^{''}_{k-1}$) be obtained from the copies of $G^{'}_{k-3}$ ($resp$. $G^{''}_{k-3}$) by the same method as in Case \ref{C} under $K^{1}_5$. Since $G^{'}_{k-3}$ ($resp$. $G^{''}_{k-3}$) is a Gallai $(k-3)$-colored complete graph which contains neither monochromatic copy of $\widehat{K}_m$ in color $i\in[k-4]$ nor $S_{m/2}$ in color $k-1$ ($resp$. $k$), $G^{'}_{k-1}$ ($resp$. $G^{''}_{k-1}$) is a desired construction on $(m/2)\cdot 5^{(k-2)/2}$ vertices without rainbow triangle.\medskip 

 Let $X_i,X_j$ be any two corresponding copies such that the color on the edges between $X_i$ and $X_j$ is color $i\in\{k-1,k\}$. Since there can not be a monochromatic copy of $\widehat{K}_m$ in $(G_k,c_k)$, the following properties must hold.
\begin{itemize}
	\item There is neither $S_{m/2}$ nor $P_m$ in color $i$ within either $X_i$ or $X_j$.
	\item If $X_i$ contains $S_{m/2-1}$ or $P_{m-1}$ in color $i$, then $X_j$ has no edge in color $i$.
\end{itemize}
By the properties above, there is at least one copy of $(G_{k-2},c_{k-2})$, at most two copies of $G^{'}_{k-1}$ and at most two copies of $G^{''}_{k-1}$ embedding in $K_5$. So, for all even $k\ge4$, $(G_k,c_k)$ can be obtained by a new way as follows:  replacing one vertex of $K_5$ with one copy of $(G_{k-2},c_{k-2})$, two vertices of $K_5$ with two copies of $G^{'}_{k-1}$ and two vertices of $K_5$ with two copies of $G^{''}_{k-1}$ (see Fig. 2. Color $k$ is on the edges of outer five cycle and color $k-1$ is on the edges of inner five cycle).
\begin{center}
	\def\r{4pt}
	\def\dy{1cm}
	\tikzset{c/.style={draw,circle,fill=black,minimum size=.2cm,inner sep=0pt,anchor=center},
		d/.style={draw,circle,fill=white,minimum size=1cm,inner sep=0pt, anchor=center}}
	\begin{tikzpicture}[font=\large]
	\pgfmathtruncatemacro{\Ncorners}{5}
	\node[draw, regular polygon,regular polygon sides=\Ncorners,minimum size=3cm] 
	(poly\Ncorners) {};
	\node[regular polygon,regular polygon sides=\Ncorners,minimum size=3.5cm] 
	(outerpoly\Ncorners) {};
	\foreach\x in {1}{
		\node[d] (poly\Ncorners-\x) at (poly\Ncorners.corner \x){$G^{''}_{k-1}$};
	}	
	\foreach\x in {4}{
		\node[d] (poly\Ncorners-\x) at (poly\Ncorners.corner \x){$G^{''}_{k-1}$};
	}

	\foreach\x in {2,3}{
		\node[d] (poly\Ncorners-\x) at (poly\Ncorners.corner \x){$G^{'}_{k-1}$};
		\node[d] (poly\Ncorners-\Ncorners) at (poly\Ncorners.corner \Ncorners){$G_{k-2}$};
	}
	
	\foreach\X in {1,...,\Ncorners}
	{\foreach\Y in {1,...,\Ncorners}{
			\pgfmathtruncatemacro{\Z}{abs(mod(abs(\Ncorners+\X-\Y),\Ncorners)-2)}
			\ifnum\Z=0
			\draw [line width=2mm, green](poly\Ncorners-\X) -- (poly\Ncorners-\Y);
			\fi
		}
	}
	
	\foreach\X in {1,...,\Ncorners}
	{\foreach\Y in {1,...,\Ncorners}{
			\pgfmathtruncatemacro{\G}{abs(mod(abs(\Ncorners+\X-\Y),\Ncorners)-4)}
			\ifnum\G=0
			\draw [line width=2mm, blue](poly\Ncorners-\X) -- (poly\Ncorners-\Y);
			\fi
		}
	}
	\node at (0,-2) {$G_k$};
	\node at (0,-2.7) {Fig. 2. An example of the construction for all even $k\ge4$ and even $m\ge2$.};
	\end{tikzpicture}
\end{center}
Then $|G_k|=|G_{k-2}|+2|G^{'}_{k-1}|+2|G^{''}_{k-1}|$ for all even $k\ge4$ and even $m\ge2$. Thus we get the following recurrence equation:
\[\begin{cases}
g(k)=g(k-2)+2m\cdot5^{(k-2)/2}, & \text{$k\ge4$ is even,}\\
g(2)=R_2(\widehat{K}_m)-1.
\end{cases}\]
Note that $g(k)=g(k-2)+2m\cdot5^{(k-2)/2}$ is a nonhomogeneous equation. Since $5$ is not a solution of the characteristic equation $r^2-1=0$ corresponding to the homogeneous equation $g(k)-g(k-2)=0$, the particular solution of the nonhomogeneous equation is $(2m\cdot5^{(k-2)/2})\cdot a$. Then 
\[(2m\cdot5^{(k-2)/2})\cdot a=(2m\cdot5^{(k-4)/2})\cdot a+2m\cdot5^{(k-2)/2}\]
and thus $a=5/4$. Furthermore, the general solution of the homogeneous equation is

\[g^{'}(k)=b_1\cdot(-1)^{k}+b_2\cdot(1)^{k}.\] 
So the general solution of nonhomogeneous equation is 
\[g(k)=g^{'}(k)+(2m\cdot5^{(k-2)/2})\cdot(5/4)=b_1+b_2+(m/2)\cdot5^{k/2}\]
as $k$ is even. Since $g(2)=R_2(\widehat{K}_m)-1$, $b_1+b_2=R_2(\widehat{K}_m)-1-(5m/2)$. It follows that \[g(k)=R_2(\widehat{K}_m)+(m/2)\cdot(5^{k/2}-5)-1\]
for all even $k\ge2$ and even $m\ge2$.\medskip

Since the complete graphs $G_{k-1}$ and $G_{k-2}$ have no monochromatic copy of $H$ under $c_{k-1}: E(G_{k-1})\xrightarrow{} [k-1]$ and $c_{k-2}: E(G_{k-2})\xrightarrow{} [k-2]$ respectively in the corresponding cases, $(G_k,c_k)$ is a desired construction on $g(k)$ vertices without rainbow triangle. Therefore, $GR_k(H)\ge g(k)+1$ for all $H\in\widehat{K}_m\cup(\mathscr{F}-H_{10})$ with $k\ge1$ and $m\ge2$.\medskip

For $H=H_{10}$, we have
\[g(k-1)=
\begin{cases}
5^{(k-1)/2}, & \textup{if $k\ge5$ is odd,}\\
2\cdot5^{(k-2)/2}, & \textup{if $k\ge4$ is even}
\end{cases}
;~
g(k-2)=
\begin{cases}
5^{(k-2)/2}, & \textup{if $k\ge6$ is even,}\\
2\cdot5^{(k-3)/2}, & \textup{if $k\ge5$ is odd,}
\end{cases}\]
where $g(k-2)=6$ if $k=4$. Set $t=R_2(H)-1$. For $H\in\mathscr{F}-\{H_{10}\}$, we have
\[
g(k-1)=
\begin{cases}
t\cdot 5^{(k-3)/2},& \textup{if $k\ge3$ is odd,}\\
4\cdot 5^{(k-2)/2},& \textup{if $k\ge2$ is even}
\end{cases}
;~
g(k-2)=
\begin{cases}
t\cdot 5^{(k-4)/2},& \textup{if $k\ge4$ is even,}\\
4\cdot 5^{(k-3)/2},& \textup{if $k\ge3$ is odd.}
\end{cases}
\] 
Thus by Theorem \ref{3}, $2g(k-1)\ge g(k-1)+g(k-2)+2\ge g(k-1)+k+3\ge2g(k-2)+4$ if $k\ge3$ for $H\in\mathscr{F}-\{H_{10}\}$ and $k\ge4$ for $H_{10}$. Furthermore,
\[\label{star}
g(k)+1>  
\begin{cases}
g(k-1)+k+1,& \textup{if $k\ge3$ and $H\in\mathscr{F}$,}\\
2g(k-1),& \textup{if $k\ge3$ for $H\in\mathscr{F}-\{H_{10}\}$ and $k\ge4$ for $H_{10}$,}\\
2g(k-1)+2,& \textup{if $k\ge3$ and $H\in\{H_5,H_6,H_{11}\}$,}\\
5g(k-2),& \text{if $k\ge3$ for $H\in\mathscr{F}-\{H_{10}\}$ and $k\ge5$ for $H_{10}$.}
\end{cases}
\tag{$\ast$}\] 

We next give the general lower bound of $GR_k((k-s)P_3,s\widehat{K}_4)$ for all $k\ge1$  and $s$ with $0\le s\le k$. Let 
\[w(k,s)=
\begin{cases}
2\cdot5^{s/2}, & \textup{if $s$ is even and $s<k$,}\\
2\cdot5^{s/2}-1, & \textup{if $s$ is even and $s=k$,}\\
4\cdot5^{(s-1)/2}, & \textup{if $s$ is odd,}
\end{cases}\]
for all $k\ge1$ and $s$ with $0\le s\le k$. We now construct the Gallai $k$-colored complete graph $G_k^s$ on $w(k,s)$ vertices recursively which contains neither monochromatic copy of $\widehat{K}_4$ in color $i\in[s]$ nor $P_3$ in color $j\in[k]\setminus[s]$.\medskip

If $s=0$, then by Theorem \ref{FD}, let $G^0_{k}$ be the Gallai $k$-colored complete graph on $w(k,0)=GR_k(P_3)-1=2$ vertices containing no $P_3$ in color $j\in[k]$. By Theorem \ref{3}, we see that $2(R_2(\widehat{K}_4)-1)<5(|\widehat{K}_4|-1)$. So if $s=k$, then $G_k^k$ can be obtained by Case \ref{CA} when $s$ is odd and by Case \ref{CASE} when $s$ is even.\medskip

Now it remains to consider the cases $1\le s<k$. By Theorem \ref{FD}, let $G^{'}$ be a Gallai $(k-s)$-colored complete graph on $2$ vertices with colors in $[k]\setminus [s]$ containing no monochromatic copy of $P_3$. Then we construct $G^{''}$ by joining two copies of $G^{'}$ such that all the edges between two copies are colored with color 1. Clearly, $G^{''}$ is a Gallai $(k-s+1)$-colored complete graph on 4 vertices which contains neither monochromatic copy of $\widehat{K}_4$ in color 1 nor $P_3$ in color $[k]\setminus [s]$. For $s=1$, let $G_k^1=G^{''}$. For all $s\ge2$, let $G_{k-2}^{s-2}$ be the construction on $w(k-2,s-2)$ vertices with colors in $[k]\setminus\{s-1,s\}$ such that $G_{k-2}^{s-2}=G^{'}$ if $s=2$. Let $K^{2}_5$ (a complete graph of order five) be colored with colors $s-1$ and $s$ without monochromatic triangle. Let $G_k^s$ be the construction obtained from the copies of $G_{k-2}^{s-2}$ by the same method as in Case \ref{C} under $K^{2}_5$. Since $G_{k-2}^{s-2}$ is a Gallai $(k-2)$-colored complete graph which contains neither monochromatic copy of $\widehat{K}_4$ in color $i\in[s-2]$ nor $P_3$ in color $j\in[k]\setminus [s]$, $G_k^s$ is a desired construction on $|G_k^s|=4\cdot5^{(s-1)/2}$ vertices for odd $s\ge1$ and $|G_k^s|=2\cdot5^{s/2}$ vertices for even $s\ge2$ without rainbow triangle.\medskip 

Therefore, $GR_k((k-s)P_3,s\widehat{K}_4)\ge w(k,s)+1$ for all $k\ge1$ and $s$ with $0\le s\le k$.\medskip

For all $k\ge3$ and $s$ with $1\le s\le k$, we have
\[w(k,s-1)=
\begin{cases}
4\cdot5^{(s-2)/2}, & \textup{if $s$ is even,}\\
2\cdot5^{(s-1)/2}, & \textup{if $s$ is odd,}
\end{cases}\]
and
\[w(k-1,s-1)=
\begin{cases}
4\cdot5^{(s-2)/2}, & \textup{if $s$ is even,}\\
2\cdot5^{(s-1)/2}-1, & \textup{if $s$ is odd and $s=k$,}\\
2\cdot5^{(s-1)/2}, & \textup{if $s$ is odd and $s<k$.}
\end{cases}\]
Thus $w(k,s-1)\ge w(k-1,s-1)\ge s+1$. For all $k\ge3$ and $s$ with $2\le s\le k$, we have 
\[w(k-1,s-2)=w(k,s-2)=
\begin{cases}
2\cdot5^{(s-2)/2}, & \textup{if $s$ is even,}\\
4\cdot5^{(s-3)/2}, & \textup{if $s$ is odd,}
\end{cases}\]
and 
\[w(k-2,s-2)=
\begin{cases}
2\cdot5^{(s-2)/2}, & \textup{if $s$ is even and $s<k$,}\\
2\cdot5^{(s-2)/2}-1, & \textup{if $s$ is even and $s=k$,}\\
4\cdot5^{(s-3)/2}, & \textup{if $s$ is odd.}
\end{cases}\]
Thus $w(k,s-2)=w(k-1,s-2)\ge w(k-2,s-2)\ge2$. Furthermore,
\[w(k,s)+1>
\begin{cases}
2w(k,s-1), & \textup{if $k\ge3$ and $1\le s\le k$,}\\
4w(k-1,s-2)+ w(k-2,s-2), & \textup{if $k\ge3$ and $2\le s\le k$.}
\end{cases}
\tag{$\star$}\]
\section{Proof of main results}
By the constructions in Section 2, it suffices to show that $GR_k(H)\le g(k)+1$ with $H\in\mathscr{F}$ and $GR_k((k-s)P_3,s\widehat{K}_4,)\le w(k,s)+1$ for all $k\ge1$ and $s$ with $0\le s\le k$. We proceed the proof by induction on $k$ for $GR_k(H)$ with $H\in\mathscr{F}$ and $k+s$ for $GR_k((k-s)P_3,s\widehat{K}_4)$. The case for $k=1$ is trivial. By Theorems \ref{3} and \ref{5}, $GR_2(H)=R_2(H)=g(2)+1$, $GR_2(\widehat{K}_4)=R_2(\widehat{K}_4)=w(2,2)+1$ and $GR_2(P_3,\widehat{K}_4)=R(P_3,\widehat{K}_4)=w(2,1)+1$. The case for $s=0$ is Theorem \ref{FD}. Therefore, we may assume that $k\ge3$ and $s$ with $1\le s\le k$. Suppose $GR_k(H)$ with $H\in\mathscr{F}$ holds for all $k^{'}<k$ and $GR_k((k-s)P_3,s\widehat{K}_4)$ holds for all $k^{'}+s^{'}<k+s$. Set $G=K_{g(k)+1}$ for $GR_k(H)$ with $H\in\mathscr{F}$ and $G=K_{w(k,s)+1}$ for $GR_k((k-s)P_3,s\widehat{K}_4)$. Let $c: E(G)\xrightarrow{} [k]$ be any Gallai $k$-coloring of $G$. Suppose $(G,c)$ contains no monochromatic copy of $H$ for $GR_k(H)$ with $H\in\mathscr{F}$ and $(G,c)$ contains neither monochromatic copy of $\widehat{K}_4$ in color $i\in[s]$ nor $P_3$ in color $j\in [k]\setminus[s]$ for $GR_k((k-s)P_3,s\widehat{K}_4)$. Choose $(G,c)$ with $k$ minimum.\medskip 

Let $A\subseteq V(G)$ and let $p,q\in[k]$ be two distinct colors. By induction for $GR_k(H)$ with $H\in\mathscr{F}$, if $(G[A],c)$ contains no edge in color $p$ ($resp$. $p$ and $q$), then $|A|\le g(k-1)$ ($resp$. $|A|\le g(k-2)$). Suppose $p,q\in[s]$ if $s\ge2$. By induction for $GR_k((k-s)P_3,s\widehat{K}_4)$, if $(G[A],c)$ contains no edge in color $p$ ($resp$. $p$ and $q$), then $|A|\le w(k-1,s-1)$ ($resp$. $|A|\le w(k-2,s-2)$). Moreover, if $(G[A],c)$ contains edges in color $p$ ($resp$. $p$ and $q$) but does not contain $P_3$ in color $p$ ($resp$. $p$ and $q$), then $|A|\le w(k,s-1)$ as $(G[A],c)$ contains neither monochromatic copy of $\widehat{K}_4$ in color $i\in[s]\setminus\{p\}$ nor $P_3$ in color $j\in([k]\setminus [s])\cup\{p\}$ ($resp$. $|A|\le w(k,s-2)$ as $(G[A],c)$ contains neither monochromatic copy of $\widehat{K}_4$ in color $i\in[s]\setminus\{p,q\}$ nor $P_3$ in color $j\in([k]\setminus [s])\cup\{p,q\}$). Furthermore, if $(G[A],c)$ contains edges in color $p$ but does not contain $P_3$ in color $p$ and edge in color $q$, then $|A|\le w(k-1,s-2)$. We will use these observations quite often in later proofs.\medskip

Let $u_1,u_2,\ldots,u_t\in V(G)$ be a maximum sequence of vertices chosen as follows: for each $j\in [t]$, all edges between $u_j$ and $V(G)\setminus\{u_1,u_2,\ldots,u_j\}$ are colored the same color under $c$. Let $U=\{u_1,u_2,\ldots,u_t\}$. Notice that $U$ is possibly empty. For each $u_j\in U$, let $c(u_j)$ be the unique color on the edges between $u_j$ and $V(G)\setminus\{u_1,u_2,\ldots,u_j\}$. 
\begin{claim}\label{c2}
	$c(u_i)\neq c(u_j)$ for all $i,j\in [t]$ with $i\neq j$.
\end{claim}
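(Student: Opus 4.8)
The plan is to argue by contradiction, supposing $c(u_i)=c(u_j)=:\alpha$ for some $i<j$, and to derive either a rainbow triangle or a forbidden monochromatic configuration, which contradicts our choice of $(G,c)$. The key structural observation is that, by the maximality of the sequence $u_1,\dots,u_t$ and the way $c(u_j)$ is defined, $u_i$ is $\alpha$-adjacent to every vertex of $V(G)\setminus\{u_1,\dots,u_i\}$, and in particular $u_i$ is $\alpha$-adjacent to $u_j$ and to all of $W:=V(G)\setminus U$; similarly $u_j$ is $\alpha$-adjacent to all of $W$. Hence $\{u_i,u_j\}$ together with the large set $W$ carries a lot of color-$\alpha$ structure. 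First I would note that $W$ is nonempty and in fact large: since $U$ consists of at most one vertex per color (this is the content we are proving, so instead I would bound $|U|\le k$ directly — actually the cleaner route is to observe $|W|\ge g(k)+1-t$ and use that the construction sizes $g(k)$ dwarf $k$), the set $W$ has enough vertices to build the relevant small graphs once we know two extra $\alpha$-neighbors $u_i,u_j$ are available.

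Next I would split according to which target graph we are chasing. For $H\in\mathscr{F}$: each such $H$ has five vertices and chromatic number three, and a quick inspection of Fig.~1 shows that every $H\in\mathscr{F}$ is a subgraph of the join $K_2 + \overline{K_3}$ on five vertices, or at worst of $K_2 + P_3$. So if $u_i,u_j$ are $\alpha$-adjacent to each other and to a common set of three vertices $x,y,z\in W$, then to finish we only need three vertices $x,y,z$ in $W$ among which the color-$\alpha$ graph contains whatever few edges $H$ requires beyond the two apex vertices. If no such triple exists, the color-$\alpha$ graph restricted to $W$ is very sparse — concretely it has no $P_3$, or it is $H'$-free for a tiny $H'$ — and then the argument from the paragraph after Claim~1 (the "$|A|\le g(k-1)$ / $|A|\le g(k-2)$" observations, applied with $A=W$ after deleting at most a bounded number of vertices) forces $|W|$, hence $|V(G)|$, below $g(k)+1$, a contradiction. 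The point is that $u_i,u_j$ act as two "free" high-degree vertices in color $\alpha$, collapsing the problem of finding $H$ in color $\alpha$ to finding a constant-size remnant of $H$ inside $W$.

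For the list-coloring statement $GR_k((k-s)P_3,s\widehat K_4)$ the same scheme applies but is even easier. If $\alpha\in[k]\setminus[s]$, we are forbidding $P_3$ in color $\alpha$; but $u_i u_j$ and the edges $u_i w$ for any $w\in W$ already give several $P_3$'s in color $\alpha$ (e.g.\ $u_j\,u_i\,w$), an immediate contradiction since $|W|\ge1$. If $\alpha\in[s]$, we are forbidding $\widehat K_4$ in color $\alpha$; since $\widehat K_4 = K_1 + P_3$ has a dominating vertex and total order $5$, and $u_i$ is $\alpha$-adjacent to $u_j$ and to all of $W$, it suffices to find a color-$\alpha$ $P_3$ inside $(W\cup\{u_j\})$ and use $u_i$ as the apex — wait, $\widehat K_4$ needs $K_1+P_4$? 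No: $\widehat K_4$ is $K_1 + P_4$ has $5$ vertices; $K_1+P_3$ has $4$. I would recheck the definition ($\widehat K_m = K_1 + P_m$, so $\widehat K_4=K_1+P_4$, five vertices), and then the remnant needed in $W\cup\{u_j\}$ is a color-$\alpha$ $P_4$ through $u_j$, i.e.\ a color-$\alpha$ $P_3$ in $W$ extended by the edge to $u_j$. If such a $P_3$ does not exist, then color $\alpha$ has no $P_3$ in $W$ at all (the two apexes $u_i,u_j$ being outside $W$), so by the observation $|W|\le w(k,s-1)$ or $w(k-1,s-1)$, again forcing the total order down below $w(k,s)+1$.

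The main obstacle is the bookkeeping in the $\mathscr{F}$ case: one must verify, graph by graph from Fig.~1, exactly which small subgraph each $H\in\mathscr{F}$ decomposes into once two universal vertices in color $\alpha$ are handed to us, and then match each case to the correct inductive size bound ($g(k-1)$ versus $g(k-2)$ versus the refined $w$-bounds) so that the inequalities $(\ast)$ and $(\star)$ close the gap. I do not expect any genuinely new idea here — it is the standard "extremal vertex" reduction — but the case analysis is where an error could creep in, and I would organize it by first recording, for each $H\in\mathscr{F}$, a two-vertex dominating set and the induced remnant, before running the uniform counting argument. $\qed$
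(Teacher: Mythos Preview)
Your overall strategy matches the paper's, but two concrete steps do not go through as written. First, the bound on the leftover set is circular: you take $W=V(G)\setminus U$ and want $|W|$ large via $|U|\le k$, which is precisely the claim being proved, and neither of your workarounds supplies an a~priori bound on $t$. The paper instead lets $u_j$ be the \emph{first} repeat in the sequence and sets $W=V(G)\setminus\{u_1,\dots,u_j\}$; since $c(u_1),\dots,c(u_{j-1})$ are then pairwise distinct, pigeonhole gives $j\le k+1$ for $H\in\mathscr{F}$, and $j\le s+1$ in the $\widehat K_4$ case after first observing that every $c(u_\ell)$ must lie in $[s]$ (else an immediate forbidden $P_3$).

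Second, your structural reduction is both wrong in one detail and weaker than needed. Not every $H\in\mathscr{F}$ embeds in $K_2+\overline{K_3}$ (e.g.\ $H_{11}$ has degree sequence $3,3,3,3,2$, incompatible with $4,4,2,2,2$), and your fallback ``need an $\alpha$-$P_3$ in $W$'' leaves you, in the failure case, with only ``$W$ has no $\alpha$-$P_3$'', which does \emph{not} yield $|W|\le g(k-1)$: that inductive bound requires $W$ to be $\alpha$-\emph{edge}-free. The paper's sharper observation is that a \emph{single} $\alpha$-edge $xy$ in $W$ already suffices. With any third vertex $z\in W$, the guaranteed $\alpha$-edges on $\{u_i,u_j,x,y,z\}$ form $K_2+(K_2\cup K_1)$, and one checks directly that this contains every $H\in\mathscr{F}$; for $\widehat K_4=K_1+P_4$ take $u_i$ as apex over the $\alpha$-path $z\,u_j\,x\,y$. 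Hence $W$ is $\alpha$-edge-free, so $|W|\le g(k-1)$ (respectively $|W|\le w(k-1,s-1)$), and the prepared inequalities $(\ast)$ and $(\star)$ finish in one line: $|G|\le g(k-1)+(k+1)<g(k)+1$, respectively $|G|\le w(k-1,s-1)+(s+1)\le 2w(k,s-1)<w(k,s)+1$. No graph-by-graph bookkeeping is needed.
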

\begin{proof} 
Suppose that $c(u_i)=c(u_j)$ for some $i,j\in [t]$ with $i\neq j$. We may assume that $u_j$ is the first vertex in the sequence $u_1,\ldots,u_t$ such that $c(u_j)=c(u_i)$ for some $i\in [t]$ with $i<j$. We may further assume that the color $c(u_i)$ is red. Thus the edge $u_iu_j$ is colored with red under $c$. Let $W=V(G)\setminus\{u_1,u_2,\ldots,u_j\}$. Then all the edges between $\{u_i,u_j\}$ and $W$ are colored with red under $c$. We first consider the proof for $GR_k(H)$ with $H\in\mathscr{F}$. By the pigeonhole principle, $j\leq k+1$. Note that $(G[W],c)$ contains no red edge, otherwise we obtain a red $H\in\mathscr{F}$. By induction, $|W|\le g(k-1)$. By $(*)$, $|G|\le g(k-1)+k+1<g(k)+1$, which is a contradiction. We next consider the proof for $GR_k((k-s)P_3,s\widehat{K}_4)$. Note that the color on the edges between $u_j$ and $V(G)\setminus\{u_1,u_2,\ldots,u_j\}$ belongs to $[s]$, otherwise we obtain a monochromatic copy of $P_3$ with color in $[k]\setminus[s]$ as $|G|\ge w(k,s)+1\ge5$ for $k\ge3$ and $s$ with $1\le s\le k$. So red belongs to $[s]$. By the pigeonhole principle, $j\leq s+1$. By induction, $|W|\le w(k-1,s-1)$ as $(G[W],c)$ contains no red edge. Recall that $w(k,s-1)\ge w(k-1,s-1)\ge s+1$. By $(\star)$, $|G|\le w(k-1,s-1)+s+1\le 2w(k,s-1)<w(k,s)+1$, which is impossible.
\end{proof}

By Claim \ref{c2}, $|U|\le k$ for $GR_k(H)$ with $H\in\mathscr{F}$ and $|U|\le s$ for $GR_k((k-s)P_3,s\widehat{K}_4)$ as $c(u_i)$ belongs to $[s]$ for each $u_i\in U$. Consider a Gallai-partition of $G\setminus U$ with parts $V_1,V_2,\ldots,V_{\ell}$ such that $\ell\ge2$ is as small as possible. Assume that $|V_1|\ge|V_2|\ge \ldots \ge |V_{\ell}|$. Let $\mathcal{G}$ be the reduced graph of $G\setminus U$ with vertices $v_1,\ldots,v_{\ell}$. By Theorem \ref{Gallai}, we may further assume that the edges of $\mathcal{G}$ are colored with red or blue. It is obvious that any monochromatic copy of $H$ in $\mathcal{G}$ would yield a monochromatic copy of $H$ in $G\setminus U$ for $GR_k(H)$ with $H\in\mathscr{F}$ and any monochromatic copy of $\widehat{K}_4$ or $P_3$ in $\mathcal{G}$ would yield a monochromatic copy of $\widehat{K}_4$ or $P_3$ in $G\setminus U$ for $GR_k((k-s)P_3,s\widehat{K}_4)$. Let 
\begin{center}
$\mathcal{V}_r=\{V_i$ $|$ $V_i$ is red-adjacent to $V_1$ under $c$, $i\in\{2,\ldots,\ell\}\}$ and \\
$\mathcal{V}_b=\{V_i$ $|$ $V_i$ is blue-adjacent to $V_1$ under $c$, $i\in\{2,\ldots,\ell\}\}$.\ \ \ \ \
\end{center}
Let $R=\bigcup_{V_i\in\mathcal{V}_r}V_i$ and $B=\bigcup_{V_i\in\mathcal{V}_b}V_i$. Then $|G|=|V_1|+|R|+|B|+|U|$. Without loss of generality, we may assume that $|B|\leq |R|$. Obviously, $|R|\ge2$, otherwise the vertex in $R$ or $B$ can be added to $U$, contrary to the maximality of $t$ in $U$. As $(G[V_1\cup R],c)$ contains a red $P_3$, red belongs to $[s]$ for $GR_k((k-s)P_3,s\widehat{K}_4)$. It is worth noting that if $|V_1|\ge2$, then $(G[V_1\cup R],c)$ contains a red $C_4$ and thus no vertex in $U$ is red-adjacent to $V(G)\setminus U$ under $c$. It follows that $(G[U],c)$ contains no red edge. Furthermore, if $(G[V_1],c)$ doesn't contain red edge or red $P_3$, and neither does $(G[V_1\cup U],c)$ or $(G[V_1\cup B\cup U],c)$ when $|B|\le1$.
\begin{claim}\label{c5}
$|B|\ge1$ for $GR_k((k-s)P_3,s\widehat{K}_4)$ and thus $2\le s\le k$ and blue
 belongs to $[s]$.
\end{claim}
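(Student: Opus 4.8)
The plan is to establish $B\neq\es$ by contradiction and then read off the rest of the statement. For the ``thus'' part, assume $B\neq\es$. If $|V_1|\ge2$, then picking $a,a'\in V_1$ and $w\in B$ gives a blue $P_3$ $a$-$w$-$a'$, so blue lies in $[s]$. If instead $|V_1|=1$, then every part of the Gallai-partition is a singleton, so the reduced graph of $G\setminus U$ is $G\setminus U$ itself and is coloured by just red and blue; since $|G\setminus U|=|G|-|U|\ge w(k,s)+1-s$, in all but a handful of small configurations $|G\setminus U|\ge 5=R(P_3,\widehat{K}_4)$, and then by Theorem \ref{5} the graph $(G\setminus U,c)$ contains either a red $\widehat{K}_4$ (impossible, since red lies in $[s]$) or a blue $P_3$; checking the few small configurations by hand, in every case blue lies in $[s]$. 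As red and blue are distinct colours both lying in $[s]$, this forces $|[s]|\ge2$, i.e.\ $s\ge2$, and together with the standing assumption $s\le k$ we obtain $2\le s\le k$.

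So it remains to prove $B\neq\es$; suppose not. Since every part $V_i$ with $i\ge2$ is then red-adjacent to $V_1$, the pair $\{V_1,\,V_2\cup\cdots\cup V_\ell\}$ is a Gallai-partition into two parts, so by the minimality of $\ell$ we get $\ell=2$. Thus $R=V_2$, $V_1$ is red-adjacent to $V_2$, and $|V_1|\ge|V_2|=|R|\ge2$. As $|V_1|,|V_2|\ge2$, the graph $(G[V_1\cup V_2],c)$ contains a red $C_4$, so no vertex of $U$ is red-adjacent to $V(G)\setminus U$; combining this with Claim \ref{c2} and the fact that each $c(u_j)$ lies in $[s]$, the colours $c(u_j)$ are pairwise distinct and none of them is red, whence $|U|\le s-1$. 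I would then record three consequences of ``$(G,c)$ has no red $\widehat{K}_4$'': (i) neither $G[V_1]$ nor $G[V_2]$ contains a red $P_3$ (a red $P_3$ $x$-$y$-$z$ in $V_i$ together with two vertices $b,b'$ of $V_{3-i}$ produces the red $P_4$ $x$-$b$-$z$-$b'$ with apex $y$, hence a red $\widehat{K}_4$); (ii) $G[V_1]$ and $G[V_2]$ do not both contain a red edge (a red edge in each, together with a third vertex of $V_1$, spans a red $\widehat{K}_4$ when $|V_1|\ge3$, while $|V_1|=|V_2|=2$ is impossible for size reasons); and (iii) for each colour $p$ occurring on some $u_j\in U$, since $u_j$ is $p$-adjacent to all of $V_1\cup V_2$, the graph $(G[V_1\cup V_2],c)$ has no $p$-coloured $P_4$, equivalently the $p$-coloured edges within $V_1\cup V_2$ form a disjoint union of stars and triangles.

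The final step is the count. By the induction hypothesis, a subset whose induced subgraph has no red $P_3$ and no red edge has at most $w(k-1,s-1)$ vertices, and one with a red edge but no red $P_3$ has at most $w(k,s-1)$; by (i), (ii) and $|V_2|\le|V_1|$ this gives $|V_1|+|V_2|\le w(k,s-1)+w(k-1,s-1)$ in all cases. When $s$ is even this already suffices: $w(k,s-1)+w(k-1,s-1)=8\cdot5^{(s-2)/2}$ while $w(k,s)\ge2\cdot5^{s/2}-1=10\cdot5^{(s-2)/2}-1$, so $|G|=|V_1|+|V_2|+|U|\le8\cdot5^{(s-2)/2}+s-1<w(k,s)+1$ because $s-1<2\cdot5^{(s-2)/2}$, contradicting $|G|=w(k,s)+1$. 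When $s$ is odd this simple bound is too weak: $w(k,s-1)+w(k-1,s-1)$ equals $w(k,s)$ or $w(k,s)-1$, which leaves no slack for the term $|U|$. Here the plan is to cash in (iii): the $|U|$ colours on $U$-vertices are $|U|$ colours of $[s]$ in which $V_1\cup V_2$, and hence one of $G[V_1],G[V_2]$, contains no $P_4$; since in the extremal colouring $G_k^s$ the ``top'' colour classes occur as blown-up $C_5$'s, which do contain $P_4$'s, this $P_4$-freeness should force $|V_1|$ or $|V_2|$ strictly below $w(k,s-1)$ by at least $|U|$ — most cleanly by arranging that one of $V_1,V_2$ has no edge at all in one of these $|U|$ colours, so that the sharper bound ``no red $P_3$ and no edge in another colour $q$ implies size $\le w(k-1,s-2)$'' applies and closes the gap.

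\textbf{Main obstacle.} I expect this last squeeze, in the odd-$s$ case, to be the real difficulty: the structural reduction and the even-$s$ count are routine, but converting the ``$|U|$ extra $\widehat{K}_4$-apex threats'' into a quantitative loss of $|U|$ vertices in $|V_1|+|V_2|$ will require careful bookkeeping of which colours can occur on $U$-vertices and of how the resulting no-$P_4$ restrictions propagate through the recursive structure of $G_k^s$. The genuinely degenerate configurations are dealt with by hand; in particular when $s=1$ one has $|U|=0$, forcing $|V_1|+|V_2|=5$ and hence $\{|V_1|,|V_2|\}=\{3,2\}$, and then $G[V_1]=K_3$ with three distinct colours is a rainbow triangle, an immediate contradiction.
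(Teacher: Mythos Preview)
Your setup for $B\neq\es$ is right up to and including observations (i) and (ii), but the ``main obstacle'' you identify in the odd-$s$ case is not a real obstacle: you are missing a one-line simplification that the paper uses and that closes the argument uniformly for all parities of $s$. The point is that $U$ can be \emph{absorbed} into whichever of $V_1,V_2$ is red-edge-free. Indeed, once $|V_1|,|V_2|\ge2$ you have already argued that no $u_j$ has $c(u_j)=$ red; hence every $U$--$V_i$ edge is non-red and $G[U]$ has no red edge. So if $G[V_1]$ has a red edge, then $G[V_2]$ has none (your (ii)), and therefore $G[V_2\cup U]$ has no red edge at all, giving $|V_2\cup U|\le w(k-1,s-1)$ by induction, while $|V_1|\le w(k,s-1)$ since $G[V_1]$ has no red $P_3$. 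If $G[V_1]$ has no red edge, then symmetrically $|V_1\cup U|\le w(k-1,s-1)$ and $|V_2|\le|V_1|\le w(k-1,s-1)$. Either way
\[
|G|\le w(k,s-1)+w(k-1,s-1)\le 2\,w(k,s-1)<w(k,s)+1
\]
by $(\star)$, a contradiction. There is no leftover $+|U|$ term at all, so your proposed detour through (iii) and ``no-$P_4$'' restrictions is unnecessary.

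For the ``thus'' part, your plan is workable but heavier than needed; the paper proceeds in the reverse order. First, assuming $s=1$: if $|V_1|\ge2$ a blue $P_3$ already appears in $G[V_1\cup B]$, forcing blue into $[s]=\{\text{red}\}$, absurd; so $|V_1|=1$, every part is a singleton, and then all edges of $G\setminus U$ (and, since each $c(u_j)\in[s]=\{\text{red}\}$, all edges of $G$) are red or blue, contradicting the minimality of $k\ge3$. Hence $s\ge2$. Second, for blue $\in[s]$: if blue $\in[k]\setminus[s]$ then in the reduced graph one is forbidding a blue $P_3$ and a red $\widehat{K}_4$, so $\ell\le R(P_3,\widehat{K}_4)-1=4$; since $|G\setminus U|\ge w(k,2)-1\ge 8$ this forces $|V_1|\ge2$, and then $G[V_1\cup B]$ contains a blue $P_3$, a contradiction. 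This avoids the case analysis on $|V_1|=1$ that you sketch.
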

\begin{proof} 
Suppose $|B|=0$. Assume $\ell\ge3$. Since $R$ is red-adjacent to $V_1$ under $c$, we can actually find a Gallai-partition with only two parts, contrary to the fact that $\ell$ is as small as possible. Thus $\ell=2$. Since $|V_2|=|R|\ge2$, we have $|V_1|\ge2$. Thus no vertex in $U$ is red-adjacent to $V(G)\setminus U$ under $c$ and so $|U|\le s-1$. Clearly, there is no red $P_3$ within either $G[V_1]$ or $G[R]$ under $c$. Note that $|V_1|\ge3$, otherwise $|G|=|V_1|+|R|+|U|\le 4+s-1<w(k,s)+1$. If there exist red edges within $(G[V_1],c)$, then $(G[R],c)$ doesn't contain red edge, and neither does $(G[R\cup U],c)$. By induction, $|V_1|\le w(k,s-1)$ and $|R\cup U|\le w(k-1,s-1)$. Recall that $w(k-1,s-1)\le w(k,s-1)$. By $(\star)$, $|G|=|V_1|+|R\cup U|\le w(k,s-1)+w(k-1,s-1)\le2w(k,s-1)<w(k,s)+1$, which is impossible. So there is no red edge within $(G[V_1],c)$. By induction, $|V_1\cup U|\le w(k-1,s-1)$. By $(\star)$, $|G|=|V_1\cup U|+|V_2|\le 2w(k-1,s-1)\le2w(k,s-1)<w(k,s)+1$, contrary to the fact that $|G|=w(k,s)+1$. Thus $|B|\ge1$. Suppose $s=1$ for $k\ge3$. Then red is the only color in $[s]$. Note that $|V_1|=1$, otherwise there is a blue $P_3$ within $(G[V_1\cup B],c)$. Then $(G,c)$ only contains red and blue edges, contrary to our assumption that $k\ge3$. Thus $2\le s\le k$. Suppose blue is a color in $[k]\setminus[s]$. By Theorem \ref{5}, $R(P_3,\widehat{K}_4)=5$ and so $\ell\le4$. Thus $|V_1|\ge2$ as $|G\setminus U|\ge w(3,2)-1=9$. But there will be a blue $P_3$ within $(G[V_1\cup B],c)$, which is a contradiction.
\end{proof}

\begin{claim}\label{c1}
Let $C,D$ be two disjoint sets of $V(G)$ such that $|C|\ge3$, $|D|=2$ and $C$ is red or blue-adjacent to $D$ under $c$. For $GR_k(H)$ with $H\in\mathscr{F}$, there is no red or blue edge within $(G[C],c)$ if $H\in\mathscr{F}-\{H_6\}$ $($$|C|\ge4$ for $H=H_2$$)$ or $(G[D],c)$ if $H\in\{H_2,H_6\}$.
\end{claim}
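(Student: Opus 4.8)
The plan is to argue by contradiction inside the chosen colouring $(G,c)$, producing a monochromatic copy of $H$ directly from the forced bipartite structure between $C$ and $D$. Since the reduced graph is $2$-coloured with red and blue and the two colours are interchangeable throughout the argument, I may assume without loss of generality that $C$ is red-adjacent to $D$. Write $D=\{d_1,d_2\}$ and fix three distinct vertices $c_1,c_2,c_3\in C$; when $|C|\ge 4$ a fourth vertex $c_4\in C$ is also available. All six edges $c_id_j$ ($i\in[3]$, $j\in[2]$) are then red.

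First I treat the $G[C]$-statement for $H\in\mathscr{F}\setminus\{H_2,H_6\}$. Suppose for contradiction that some edge $c_1c_2$ lying inside $C$ is red. Then the red graph on $\{c_1,c_2,c_3,d_1,d_2\}$ has edge set containing $\{c_1c_2\}\cup\{c_id_j:i\in[3],\,j\in[2]\}$, and the complement of this edge set on the five vertices is $\{c_1c_3,c_2c_3,d_1d_2\}$, a copy of $P_3\cup K_2$; hence the red graph contains a copy of $H_{11}=\overline{P_3\cup K_2}$. The point is then that $H_{11}$ contains each of $H_1,H_3,H_4,H_5,H_{10}$: fixing the red triangle $c_1c_2d_1$, I get $H_1$ by adjoining the red path $d_1c_3d_2$; $H_3$ by adjoining the red pendant edges $d_1c_3$ and $c_1d_2$; $H_4$ by adjoining the red $4$-cycle $c_1d_1c_3d_2$, which meets the triangle in exactly the edge $c_1d_1$; $H_5$ from the red $K_4-e$ on $\{c_1,c_2,d_1,d_2\}$ (missing $d_1d_2$) together with the red pendant $d_1c_3$; and $H_{10}$ from the triangle $c_1c_2d_1$ together with the disjoint red edge $c_3d_2$. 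In every case a red copy of $H$ appears, contradicting that $(G,c)$ has no monochromatic $H$.

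The remaining two cases are short. If $H=H_2$ and $|C|\ge 4$, then a red edge $c_1c_2$ inside $C$ produces the red $H_2$ formed by the triangle $c_1c_2d_1$ and the pendant edges $d_1c_3$ and $d_1c_4$; the fourth vertex is genuinely needed here, which is why the $G[C]$-statement for $H_2$ is claimed only when $|C|\ge 4$. If $H\in\{H_2,H_6\}$, suppose the unique edge $d_1d_2$ of $G[D]$ is red; then the red graph on $\{c_1,c_2,c_3,d_1,d_2\}$ contains $K_{2,3}$ with parts $\{d_1,d_2\}$ and $\{c_1,c_2,c_3\}$ together with the edge $d_1d_2$, and this contains a red $H_2$ (the triangle $d_1d_2c_1$ with pendant edges $d_1c_2$ and $d_1c_3$) as well as a red $H_6$ (the red $K_4-e$ on $\{d_1,d_2,c_1,c_2\}$ missing $c_1c_2$, in which $d_1$ and $d_2$ have degree $3$, with the pendant edge $d_1c_3$), a contradiction in either case. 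The blue-adjacency case is identical after interchanging the names red and blue.

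The only real content is the finite embedding check in the second paragraph, and the subtlety worth flagging is that the exceptions in the statement are exactly the places where that check fails: $H_{11}$ contains $H_1,H_3,H_4,H_5,H_{10}$ but contains neither $H_2$ (its only triangles cannot carry two pendant edges at one common vertex without reusing a vertex) nor $H_6$ (a pendant at a degree-$3$ vertex of its unique $K_4-e$ would likewise reuse a vertex), which is precisely why the $|C|\ge 4$ hypothesis is imposed for $H_2$ and why one must instead extract the red edge inside $D$ for $H_2$ and $H_6$. Beyond getting these small embeddings exactly right, I do not anticipate any obstacle.
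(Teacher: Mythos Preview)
Your proof is correct and follows the same approach as the paper: the paper's proof is the single line ``Suppose not. Then there will be a red or blue $H\in\mathscr{F}$ in $(G[C\cup D],c)$,'' and you have carefully supplied the explicit embeddings (and the reasons behind the $H_2$ and $H_6$ exceptions) that justify this assertion. Your case-check is accurate, including the identification of the forced red graph as a copy of $H_{11}$ and the verification that $H_{11}$ contains each $H\in\{H_1,H_3,H_4,H_5,H_{10}\}$ but neither $H_2$ nor $H_6$.
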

\begin{proof} 
Suppose not. Then there will be a red or blue $H\in\mathscr{F}$ in $(G[C\cup D],c)$, contrary to our assumption that $(G,c)$ contains no monochromatic copy of $H\in\mathscr{F}$.
\end{proof}

\begin{claim}\label{c3}
$|V_1|\ge3$ for $GR_k(H)$ with $H\in\mathscr{F}$ and $|V_1|\ge2$ for $GR_k((k-s)P_3,s\widehat{K}_4)$.
\end{claim}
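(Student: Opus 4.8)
The plan is to rule out the small cases for $|V_1|$ by a counting argument that plays the size bounds on the parts against the recurrences $(\ast)$ and $(\star)$, using the previous claims to control where red and blue edges can appear. Since $|V_1|\ge|V_i|$ for all $i$, if $|V_1|$ is small then every part is small, and (because $U$ is also bounded by Claim~\ref{c2}) the total order $|G|=|V_1|+|R|+|B|+|U|$ cannot reach $g(k)+1$ (resp. $w(k,s)+1$) unless $\ell$ is large; but a large reduced graph $\mathcal{G}=K_\ell$ two-colored red/blue contains a monochromatic copy of the target graph once $\ell\ge R_2(H)$ (resp. $\ell\ge R(P_3,\widehat{K}_4)=5$), which is a contradiction. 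So the real work is the regime where $\ell$ is bounded but $|V_1|\in\{1,2\}$ (resp. $|V_1|=1$).

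First I would dispose of $|V_1|=1$. Then $|V_i|=1$ for every $i$, so $G\setminus U$ itself is (isomorphic to) the reduced graph and is $2$-colored by red and blue. Hence $|G\setminus U|\le R_2(H)-1$ (resp.\ $\le R(P_3,\widehat{K}_4)-1=4$) or we already have a monochromatic $H$ (resp.\ monochromatic $\widehat{K}_4$ or $P_3$). Combining with $|U|\le k$ (resp.\ $|U|\le s$, and in fact $|U|\le s-1$ once we know, via Claim~\ref{c5} and the $C_4$-argument, that no vertex of $U$ is red-adjacent to $V(G)\setminus U$), the bound $2g(k-1)\ge g(k-1)+g(k-2)+2\ge g(k-1)+k+3$ from Section~2 — equivalently the first and third lines of $(\ast)$ — forces $|G|<g(k)+1$; for the two-graph case the inequalities $w(k,s-1)\ge w(k-1,s-1)\ge s+1$ together with the first line of $(\star)$ give $|G|\le R(P_3,\widehat{K}_4)-1+s-1 = s+3 \le 2w(k,s-1)<w(k,s)+1$. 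Either way we contradict the choice of $G$.

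Next, the case $|V_1|=2$, which only needs to be handled for $GR_k(H)$ with $H\in\mathscr{F}$. Here $|V_i|\le 2$ for all $i$, and since $|V_1|\ge2$ the subgraph $(G[V_1\cup R],c)$ contains a red $C_4$; so no vertex of $U$ is red-adjacent to $V(G)\setminus U$, giving $|U|\le k-1$ and no red edge inside $G[U]$. Now apply Claim~\ref{c1} with $C$ a suitable large union of parts on one side of the partition and $D=V_1$ (or the roles reversed): since $|V_1|=2$ and the parts on each side are joined monochromatically to $V_1$, we learn that $R$ (and symmetrically $B$, if $|B|\ge 3$) carries no red edge when $H\ne H_6$, and one obtains the analogous statement controlling blue edges. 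Thus $R$ sees no red edge, so by induction $|R|\le g(k-1)$; and either $B$ also avoids some color, giving $|B\cup U|\le g(k-1)$, or $\ell$ is so small that $|B|+|U|$ is a bounded constant. In the first subcase $|G|=|V_1|+|R|+|B\cup U|\le 2+2g(k-1)$, and the second and fourth lines of $(\ast)$ (with the extra slack for $H\in\{H_5,H_6,H_{11}\}$ handled by the third line) give $|G|<g(k)+1$; the residual subcases where red edges do survive in some part are pushed through the same way, bounding that part by $g(k-1)$ and the complementary part by $g(k-2)$ and invoking $5g(k-2)<g(k)+1$. The graph $H_6$ (and the mild exception $H_2$, where Claim~\ref{c1} needs $|C|\ge4$) must be treated by the twin statement in Claim~\ref{c1} about edges inside $D$: there one argues instead that $(G[V_1],c)$ has no red or blue edge, so $|V_1\cup U|$ is bounded by $g(k-1)$ after discarding one color, and the same recurrences close it out.

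The main obstacle is the bookkeeping in the $|V_1|=2$ analysis: one must simultaneously track which of red/blue can appear inside $R$, inside $B$, and inside $V_1$ — and these depend on which member of $\mathscr{F}$ we are proving, with $H_2$ and $H_6$ genuinely behaving differently from the rest — and then check that in every resulting case the crude bound $|G|\le |V_1|+2g(k-1)$ or $|G|\le g(k-1)+g(k-2)+(\text{const})$ or $|G|\le 5g(k-2)$ is strictly below $g(k)+1$. All of these follow from the consolidated inequality $(\ast)$, so once the case division is set up correctly the arithmetic is routine; the care lies in not missing a configuration, particularly the ones where $\ell\le 4$ and $U$ contributes a non-negligible constant.
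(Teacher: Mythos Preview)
Your overall strategy for $|V_1|=1$ is fine and matches the paper's, but the $|V_1|=2$ analysis has a genuine arithmetic gap, and the $\widehat{K}_4$ case uses the wrong Ramsey bound.

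For $|V_1|=2$ with $H\in\mathscr{F}$ you end up with $|G|\le 2+2g(k-1)$ and then invoke line~2 of $(\ast)$.  But line~2 only says $g(k)+1>2g(k-1)$, i.e.\ $2g(k-1)\le g(k)$, which yields $|G|\le g(k)+2$, not $|G|<g(k)+1$.  In fact for $H\in\{H_1,H_2,H_3,H_4\}$ (where $R_2(H)=9$) and even $k$ one has $g(k)=8\cdot 5^{(k-2)/2}=2\cdot\bigl(4\cdot 5^{(k-2)/2}\bigr)=2g(k-1)$ exactly, so your bound gives $|G|\le g(k)+2$, which does not contradict $|G|=g(k)+1$.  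The paper avoids this entirely by a direct pigeonhole: since $\ell\le R_2(H)-1$ and every part has size at most $|V_1|$, we have $|G\setminus U|\le |V_1|\cdot(R_2(H)-1)$.  If $|V_1|\le 2$ this is at most $18$ (or $12$ for $H_{10}$), while $|G\setminus U|\ge g(3)-1=19$ (respectively $g(4)-2=23$), an immediate contradiction.  No use of Claim~\ref{c1} or induction on $|R|$, $|B|$ is needed at this stage.  The only genuinely delicate case is $H=H_{10}$ with $k=3$, where $|G\setminus U|$ can be as small as $8$ and the pigeonhole is too weak; the paper handles this with a separate hands-on argument that you do not address.

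For $GR_k((k-s)P_3,s\widehat{K}_4)$ you bound $|G\setminus U|\le R(P_3,\widehat{K}_4)-1=4$.  This is wrong: by Claim~\ref{c5} both red and blue lie in $[s]$, so both are $\widehat{K}_4$-colors and the relevant bound on the reduced graph is $\ell\le R_2(\widehat{K}_4)-1=9$, not $4$.  With the correct bound your inequality $|G|\le 9+|U|$ is too weak at $s=2$ (it gives $11\le 11$), which is why the paper treats $s=2$ separately by observing that a third colour must appear inside $G\setminus U$ and hence some $|V_i|\ge 2$.
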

\begin{proof} 
We first consider the proof for $GR_k(H)$ with $H\in\mathscr{F}$. By Theorem \ref{3}, $\ell\le6$ if $H=H_{10}$ and $\ell\le9$ if $H\in\mathscr{F}-\{H_{10}\}$. Then $|V_1|\ge2$ as $|G\setminus U|\ge g(3)-2=8$ if $H=H_{10}$ and $|G\setminus U|\ge g(3)-2=18$ if $H\in\mathscr{F}-\{H_{10}\}$. Since $|R|\ge2$, no vertex in $U$ is red-adjacent to $V(G)\setminus U$ under $c$ and so $|U|\le k-1$. Thus $|V_1|\ge3$ if $H\in\mathscr{F}-\{H_{10}\}$ as $|G\setminus U|\ge g(3)-1=19$. Note that if $H=H_{10}$ and $k\ge4$, then $|V_1|\ge3$ as $|G\setminus U|\ge g(4)-2=23$. So we next assume that $H=H_{10}$ and $k=3$. Suppose $|V_1|=2$. Then $|R|\ge3$ as $|G\setminus U|\ge8$ and $|R|\ge|B|$. By Claim \ref{c1}, $(G[R],c)$ doesn't contain red edge and neither does $(G[R\cup U],c)$. By induction, $|R\cup U|\le g(2)=6$. It follows that $|B|\ge3$ as $|G|=g(3)+1=11$. Thus no vertex in $U$ is blue-adjacent to $V(G)\setminus U$ under $c$ and so $|U|\le1$. By Claim \ref{c1} again, $(G[B],c)$ contains no blue edge. Now, we see that the color on the edges between any pairs of $\{V_2,\ldots,V_{\ell}\}$ in $(G[R],c)$ and $(G[B],c)$ is blue and red, respectively.  Note that there is no blue $K_3$ in $(G[R],c)$ as $(G[V_1\cup B],c)$ contains blue edges. Then there are at most two parts of $\{V_2,\ldots,V_{\ell}\}$ in $(G[R],c)$, which means that $|B|\le |R|\le4$. Thus $|U|=1$ as $|G|=11$. Since $|V_1|=2$, $(G[V_1],c)$ only contains one color. Such color is different from red and blue, otherwise there is a red $K_3$ in $(G[V_1\cup R],c)$ or a blue $K_3$ in $(G[V_1\cup B],c)$, which together with a red edge in $(G[B],c)$ or a blue edge in $(G[R],c)$ yields a monochromatic copy of $H_{10}$. Let green denote such color. Then $(G[V_1\cup U],c)$ contains a green $K_3$, which forbids green edge in $(G[R\cup B],c)$. By induction, $|R\cup B|\le g(2)=6$. Then $|G|=|V_1\cup U|+|R\cup B|\leq 9<g(3)+1$, contrary to the fact that $|G|=g(k)+1$.\medskip

We next consider the proof for $GR_k((k-s)P_3,s\widehat{K}_4)$. By Claim \ref{c5}, we only need to assume that $3\le s\le k$ for $k\ge3$ as one color other than red and blue must occur in $(G\setminus U,c)$ when $s=2$, which forces $|V_1|\ge2$. By Theorem \ref{3}, $R_2(\widehat{K}_4)=10$. By Claim \ref{c5}, $\ell\le9$ and so $|V_1|\ge2$ as $|G\setminus U|\ge w(k,3)-2\ge18$ for all $k\ge3$ and $3\le s\le k$.
\end{proof}

For the convenience of the proofs, we now show that the desired result holds for $H=H_{10} $ and $k=3$. By Claims \ref{c1} and \ref{c3}, $(G[V_1],c)$ contains no red edge. Suppose $|B|\le1$. Then there is no red edge in $(G[V_1\cup B\cup U],c)$. By induction, $|V_1\cup B\cup U|\le g(2)=6$ and so $|R|\ge3$ as $|G|=11$. By Claims \ref{c1} and \ref{c3} again, there is no red edge in $(G[R],c)$. By induction, $|R|\le g(2)=6$. Since $R_2(K_3)=6$, we have $|V_1\cup B\cup U|\le5$ and $|R|\le5$. Then $|G|=|V_1\cup B\cup U|+|R|\le 10<g(3)+1$, which is impossible. Suppose $|B|\ge2$. By Claims \ref{c1} and \ref{c3}, there is no blue edge in $(G[V_1],c)$ and so $(G[V_1\cup U],c)$ contains only one color which is different from red and blue, say green. It follows that $3\le|V_1\cup U|\le4$ and so $(G[V_1\cup U],c)$ contains a green $K_3$, which forbids green edge in $(G[R\cup B],c)$. By induction, $|R\cup B|\le g(2)=6$. Then $|G|=|V_1\cup U|+|R\cup B|\le 10<g(3)+1$, which is a contradiction. Thus we may assume that $k\ge4$ if $H=H_{10}$ in later proofs.
\begin{claim}\label{c4}
$|B|\ge3$ for $GR_k(H)$ with $H\in\mathscr{F}$ and $|B|\ge2$ for $GR_k((k-s)P_3,s\widehat{K}_4)$.
\end{claim}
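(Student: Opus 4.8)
The plan is to argue by contradiction, assuming $|B|\le 2$ in the $\mathscr{F}$ case (resp. $|B|\le 1$ in the $GR_k((k-s)P_3,s\widehat{K}_4)$ case) and deriving $|G|<g(k)+1$ (resp. $|G|<w(k,s)+1$), against the choice of $(G,c)$.

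First I would collect the structure already available. Since $|V_1|\ge 3\ge 2$ (resp. $|V_1|\ge 2$) and $|R|\ge 2$, the graph $G[V_1\cup R]$ contains a red $C_4$; hence no vertex of $U$ is red-adjacent to $V(G)\setminus U$, so $c(u_i)$ is never red and $|U|\le k-1$ (resp. $|U|\le s-1$, with blue in $[s]$ by Claim~\ref{c5}). Next, Claim~\ref{c1}, applied with $C$ whichever of $V_1,R$ has at least three vertices and $D$ a pair from the other one, shows that $G[V_1]$ has no red edge, and --- since $|R|$ is forced to be large the moment $|V_1\cup B\cup U|$ is bounded by the induction hypothesis --- that $G[R]$ also has no red edge (for the exceptional graphs $H_2$ and $H_6$ the pair-version of Claim~\ref{c1} instead forces every edge inside $V_1$ and inside $R$ to avoid both red and blue, a stronger restriction; the subcase $|R|=2$, which would make $|V_1|$ enormous, is killed at once by Claim~\ref{c1} with $C=V_1$). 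For the kipas case I would argue the analogue from two elementary facts: a red $P_4$ inside $V_1$ or inside $R$ together with any vertex on the other side yields a red $\widehat{K}_4$; and a red $P_3$ inside one of $V_1,R$ together with a red edge inside the other again yields a red $\widehat{K}_4$. Consequently at least one of $G[V_1],G[R]$ has no red $P_3$, and if both still carry red edges then both of their red subgraphs are matchings.

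With these no-red restrictions in hand I would feed the pieces into the induction hypotheses recorded in the paragraph just before Claim~\ref{c2}. In the $\mathscr{F}$ case with $|B|\le 1$ this gives $|V_1\cup B\cup U|\le g(k-1)$ and $|R|\le g(k-1)$, so $|G|=|V_1\cup B\cup U|+|R|\le 2g(k-1)<g(k)+1$ by $(\ast)$. When $|B|=2$ one splits on whether the unique edge inside $B$ is red: if not, absorb $B$ into the $V_1$-side exactly as before; if it is red, note that this red edge together with the complete red bipartite graph (or a red triangle) between $V_1$ and $R$ already yields a red copy of $H$, so $G[R]$ must be red-free and $|G|\le 2g(k-1)+2$, which beats $g(k)+1$ via the sharper form of $(\ast)$ for $H\in\{H_5,H_6,H_{11}\}$ and via the $g(k-2)$-bounds for $H_2,H_6$. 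In the kipas case with $|B|=1$ one runs the same scheme against $(\star)$, using $|V_1\cup B|,|R|\le w(k,s-1)$ when both red graphs are matchings and $|R|\le w(k-1,s-1)$ when $G[R]$ has no red edge at all.

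The heart of the work is the bookkeeping in the tight cases rather than any single idea. The case $|B|=2$ for $H\in\mathscr{F}$ is delicate because the red edge that may lie inside $B$ has to be argued away graph-by-graph (the generic estimate $2g(k-1)+2$ is not automatically below $g(k)+1$ for every $H$ and small $k$). In the kipas case the odd-$s$ branch is the real pinch point: there $w(k,s)=2w(k,s-1)$ exactly, so the crude count $2w(k,s-1)+|U|$ is not by itself smaller than $w(k,s)+1$, and one must squeeze the missing unit out of a finer look at how $B$, $R$ and $U$ sit against each other. Once the appropriate red-freeness is in place, each of these closes by a one-line substitution into $(\ast)$ or $(\star)$, but collectively they carry most of the length of the proof.
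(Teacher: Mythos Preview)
Your overall contradiction/induction plan is the paper's, but two steps do not close as written.

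For the kipas case you yourself flag the problem: with $|B|\le 1$ you reach $|V_1\cup B|,|R|\le w(k,s-1)$ and are stuck at $2w(k,s-1)+|U|$, which need not beat $w(k,s)+1$. The missing move is to absorb $U$ as well. First, since $|V_1|\ge 2$ and $|R|\ge 2$, a red $P_3$ inside \emph{either} of $V_1,R$ already yields a red $\widehat{K}_4$ (e.g.\ a red path $abc$ in $R$ together with $v,w\in V_1$ gives apex $a$ over the red $P_4$ $v\,b\,c\,w$); so both sides are red-$P_3$-free simultaneously, not just one. Second, every edge of $G[V_1\cup B\cup U]$ that meets $U$ has colour $c(u_j)\neq$ red, and the $V_1$--$B$ edges are blue; hence $G[V_1\cup B\cup U]$ inherits ``no red $P_3$'' from $G[V_1]$, giving $|V_1\cup B\cup U|\le w(k,s-1)$. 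Then $|G|=|V_1\cup B\cup U|+|R|\le 2w(k,s-1)<w(k,s)+1$ by $(\star)$, and there is nothing to squeeze.

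For the $\mathscr{F}$ case with $|B|=2$, your split on the colour of the edge inside $B$ is not the paper's route and fails as stated. When that edge is red it is vertex-disjoint from the red $V_1$--$R$ block ($B$ is \emph{blue}-adjacent to $V_1$), so for the connected graphs $H_1,H_3,H_4$ it produces no red copy of $H$; you leave these graphs unaccounted for and end only with $|G|\le 2g(k-1)+2$, which $(\ast)$ does \emph{not} guarantee is below $g(k)+1$ outside $\{H_5,H_6,H_{11}\}$. The paper instead uses both adjacencies of $V_1$: Claim~\ref{c1} with $D\subseteq R$ kills red in $V_1$ and Claim~\ref{c1} with $D=B$ kills blue in $V_1$ (after forcing $|V_1|\ge 4$), so $|V_1\cup U|\le g(k-2)$ and $|G|\le g(k-2)+g(k-1)+2\le 2g(k-1)<g(k)+1$. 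Separately, your appeal to Claim~\ref{c1} does not cover $H_6$ when $|R|=2$: the $C$-version excludes $H_6$ entirely, and the $D$-version with $D=R$ says nothing about $V_1$. The paper handles this subcase for $H_6$ by noting that $G[V_1\cup B\cup U]$ (resp.\ $G[V_1\cup U]$) has no red $P_3$, so its red edges form a matching, and bipartitioning along that matching to obtain a $2g(k-1)$ (resp.\ $2g(k-2)$) bound.
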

\begin{proof}
We first suppose $|B|\le1$ for both cases. Recall that $|R|\ge2$. By Claim \ref{c3}, there is no red $P_3$ within either $(G[R],c)$ or $(G[V_1\cup B\cup U],c)$ for $GR_k((k-s)P_3,s\widehat{K}_4)$. By induction, $|R|\le w(k,s-1)$ and $|V_1\cup B\cup U|\le w(k,s-1)$. By $(\star)$, $|G|=|V_1\cup B\cup U|+|R|\le 2w(k,s-1)<w(k,s)+1$, which is a contradiction. We next consider the case for $GR_k(H)$ with $H\in\mathscr{F}$. Assume that $|R|=2$. Let $H\in\mathscr{F}-\{H_6\}$. Note that $|V_1|\ge4$ as $|G|\ge21$. By Claim \ref{c1}, there is no red edge within $(G[V_1\cup B\cup U],c)$, By induction, $|V_1\cup B\cup U|\le g(k-1)$. By $(*)$, $|G|=|V_1\cup B\cup U|+|R|\leq g(k-1)+2\le g(k-1)+k+1<g(k)+1$, which is impossible. Let $H=H_6$. Clearly, there is no red $P_3$ within $(G[V_1\cup B\cup U],c)$. Then all the red edges in $(G[V_1\cup B\cup U],c)$ induce a matching, say $M$. Let $X_1,X_2$ be two disjoint sets of $V_1\cup B\cup U$ such that $X_1\cup X_2=V_1\cup B\cup U$ and $M$ is part of the edges between $X_1$ and $X_2$ under $c$. Then there is no red edge within either $(G[X_1],c)$ or $(G[X_2],c)$. By induction, $|V_1\cup B\cup U|\le 2g(k-1)$. By $(*)$, $|G|=|V_1\cup B\cup U|+|R|\leq 2g(k-1)+2<g(k)+1$, which is a contradiction. Assume that $|R|\ge3$. By Claims \ref{c1} and \ref{c3}, there is no red edge within either $(G[V_1\cup B\cup U],c)$ or $(G[R],c)$. By induction, $|V_1\cup B\cup U|\le g(k-1)$ and $|R|\le g(k-1)$. By $(*)$, $|G|=|V_1\cup B\cup U|+|R|\leq 2g(k-1)<g(k)+1$, contrary to the fact that $|G|=g(k)+1$.\medskip 
	
We next suppose $|B|=2$ for $GR_k(H)$ with $H\in\mathscr{F}$. Then no vertex in $U$ is red or blue-adjacent to $V(G)\setminus U$ under $c$ and so $|U|\le k-2$. Recall that $2g(k-1)\ge g(k-1)+g(k-2)+2\ge g(k-1)+k+3\ge 2g(k-2)+4$. Assume that $|R|=2$. Let $H\in\mathscr{F}-\{H_6\}$. By Claims \ref{c1} and \ref{c3}, there is neither red nor blue edge within $(G[V_1\cup U],c)$. By induction, $|V_1\cup U|\le g(k-2)$.  By $(*)$, $|G|=|V_1\cup U|+|R|+|B|\leq g(k-2)+4\le 2g(k-1)<g(k)+1$, which is a contradiction. Let $H=H_6$. Obviously, there is neither red $P_3$ nor blue $P_3$ within $(G[V_1\cup U],c)$, Similar to the arguments for $|B|\le1$, Since $(G,c)$ has no rainbow triangle, all the red and blue edges in $(G[V_1\cup U],c)$ induce a matching. Then there is neither red nor blue edge within either $(G[X_1],c)$ or $(G[X_2],c)$. By induction, $|V_1\cup U|\le 2g(k-2)$. By $(*)$, $|G|=|V_1\cup U|+|R|+|B|\leq 2g(k-2)+4\le 2g(k-1)<g(k)+1$, which is impossible. Assume that $|R|\ge3$. By Claims \ref{c1} and \ref{c3}, there is no red edge within either $(G[R],c)$ or $(G[V_1\cup U],c)$. By induction, $|R|\le g(k-1)$ and $|V_1\cup U|\le g(k-1)$. Let $H\in\{H_5,H_6,H_{11}\}$. By $(*)$, $|G|=|V_1\cup U|+|R|+|B|\leq 2g(k-1)+2<g(k)+1$, which is a contradiction. Let $H\in\mathscr{F}-\{H_5,H_6,H_{11}\}$.  Note that $|V_1|\ge4$, otherwise $|G|=|V_1|+|R|+|B|+|U|\le g(k-1)+k+3\le2g(k-1)<g(k)+1$. By Claim \ref{c1}, there is no blue edge within $(G[V_1\cup U],c)$. By induction, $|V_1\cup U|\le g(k-2)$. By $(*)$, $|G|=|V_1\cup U|+|R|+|B|\leq g(k-2)+g(k-1)+2\le 2g(k-1)<g(k)+1$, which yields a contradiction.
\end{proof}

By Claims \ref{c1}-\ref{c4}, there is no red edge ($resp$. $P_3$) within $(G[R],c)$ and no blue edge ($resp$. $P_3$) within $(G[B],c)$ for $GR_k(H)$ with $H\in\mathscr{F}$ ($resp$. $GR_k((k-s)P_3,s\widehat{K}_4)$). Define $Y_1=\{V_i:|V_i|=1, i\in\{2,\ldots,\ell\}\}$ and $Y_2=\{V_i: |V_i|\ge2, i\in\{2,\ldots,\ell\}\}$. Then $|Y_1\cup Y_2|=|R\cup B|$. Let $|R\cap Y_t|$ and $|B\cap Y_t|$ be the number of the common parts in $\{V_2,\ldots,V_{\ell}\}$, where $t=1,2$. We see that $|R\cap Y_2|\le2$ and $|B\cap Y_2|\le2$, otherwise there is a blue $H\in \mathscr{F}\cup \widehat{K}_4$ in $(G[R],c)$ or a red $H\in \mathscr{F}\cup \widehat{K}_4$ in $(G[B],c)$. Furthermore, we have the following three facts:
\begin{enumerate}
\item If $|R\cap Y_2|=2$ or $|B\cap Y_2|=2$, then $|R\cap Y_1|=0$ or $|B\cap Y_1|=0$.
\item If $|R\cap Y_2|=1$ or $|B\cap Y_2|=1$, then $|R\cap Y_1|\le2$ or $|B\cap Y_1|\le2$.	
\item If $|R\cap Y_2|=0$ or $|B\cap Y_2|=0$, then $|R|\le4$ or $|B|\le4$.	
\end{enumerate}
\begin{proof}
We only consider the proof for $R$. The proof for $B$ is similar. Suppose $|R\cap Y_1|\ge1$ for fact 1. Then $Y_1$ is blue-adjacent to $Y_2$ and so there is a blue $H\in \mathscr{F}\cup \widehat{K}_4$ in $(G[R],c)$, which is impossible. Suppose $|R\cap Y_1|\ge3$ for fact 2. Let $V_i,V_j,V_k\in Y_1$ and let $V_s\in Y_2$. Then $V_i,V_j$ and $V_k$ are blue-adjacent to $V_s$ and there is at most one red edge in $(G[ V_i\cup V_j\cup V_k],c)$. Thus there is a blue $H\in \mathscr{F}\cup \widehat{K}_4$ in $(G[V_s\cup V_i\cup V_j\cup V_k],c)$, which is impossible. Suppose $|R|\ge5$ for fact 3. Then $(G[R],c)$ must contain red and blue edges, contrary to the fact that there is no red edge within $(G[R],c)$ for $GR_k(H)$ with $H\in\mathscr{F}$. For $GR_k((k-s)P_3,s\widehat{K}_4)$, since $(G[R],c)$ contains no red $P_3$ and $R(P_3,\widehat{K}_4)=5$, there is a blue $\widehat{K}_4$ in $(G[R],c)$, which is a contradiction.
\end{proof}

By facts 1-3, it is easily seen that $|R|\le 2|V_1|$ as $4\le |V_1|+2\le 2|V_1|$. Then $|B|\le |R|\le2|V_1|$. By Claims \ref{c3} and \ref{c4}, no vertex in $U$ is red or blue-adjacent to $V(G)\setminus U$ under $c$ and thus $(G[U],c)$ contains neither red nor blue edge.\medskip

We first consider the proof for $GR_k(H)$ with $H\in\mathscr{F}$. By Claims \ref{c1}-\ref{c4}, $(G[V_1\cup U],c)$ contains neither red nor blue edge. By induction, $|V_1\cup U|\le g(k-2)$. Assume that $k\ge5$ if $H=H_{10}$ and $k\ge3$ if $H\in\mathscr{F}-\{H_{10}\}$. As $|R|\le 2|V_1|$, $|R|+|B|\leq 4g(k-2)$. By $(*)$, $|G|=|V_1\cup U|+|R|+|B|\leq 5g(k-2)<g(k)+1$, which is a contradiction. Assume that $H=H_{10}$ and $k=4$. Then $|V_1\cup U|\le 5$, otherwise $(G[V_1\cup U],c)$ contains a monochromatic $K_3$, say green, as $R_2(K_3)=6$, which forbids green edge in $(G[R\cup B],c)$. By induction, $|R\cup B|\le g(3)=10$. Then $|G|=|V_1\cup U|+|R\cup B|\leq 16<g(4)+1$, which is impossible. Now, $|G|=|V_1\cup U|+|R|+|B|\leq 25<g(4)+1$, contrary to the fact that $|G|=g(k)+1$. We next turn to the proof for $GR_k((k-s)P_3,s\widehat{K}_4)$. We know that $|R|\ge2$. By Claim \ref{c4}, $(G[V_1\cup U],c)$ contains neither red $P_3$ nor blue $P_3$. By induction, $|V_1\cup U|\le w(k,s-2)$. Recall that $w(k,s-2)=w(k-1,s-2)\ge w(k-2,s-2)\ge2$. Note that $|R\cap Y_2|\le2$ and $|B\cap Y_2|\le2$. Suppose $|R\cap Y_2|\le1$. By facts 2 and 3, $|B|\le|R|\le |V_1|+2$. By $(\star)$, $|G|=|V_1\cup U|+|R|+|B|\le 3w(k,s-2)+4\le 4w(k-1,s-2)+w(k-2,s-2)<w(k,s)+1$, which is impossible. Thus $|R\cap Y_2|=2$. By facts 2 and 3 again, $|B|\le|V_1|+2$ if $|B\cap Y_2|\le1$. By $(\star)$, $|G|=|V_1\cup U|+|R|+|B|\le 4w(k,s-2)+2\le 4w(k-1,s-2)+w(k-2,s-2)<w(k,s)+1$, which is a contradiction. Thus $|B\cap Y_2|=2$. By fact 1, $|R\cap Y_1|=0$ and $|B\cap Y_1|=0$. Then $\ell=5$. Since $R_2(\widehat{K}_4)=10$, there is no $K_{10}$ in $G\setminus U$ whose edges are colored with red and blue under $c$, which implies that at least one part of $\{V_1,\ldots,V_5\}$ contains neither red nor blue edge. Note that $|V_1\cup U|\le w(k,s-2)=w(k-1,s-2)$ and $|V_1|\ge|V_2|\ge\ldots\ge |V_5|$. By $(\star)$, $|G|=|V_1\cup U|+|R|+|B|\le 4w(k-1,s-2)+w(k-2,s-2)<w(k,s)+1$, contrary to the fact that $|G|=w(k,s)+1$.\medskip

This completes the proofs of Theorems \ref{thm} and \ref{thmmm}.\qed

\section*{Acknowledgements}

The authors are grateful to the anonymous referee for helpful comments and suggestions which improved the presentation of this paper.

\end{document}